\definecolor{grn}{rgb}{0,0.6,0}
\definecolor{mrn}{rgb}{0.3,0,0}
\definecolor{blue}{rgb}{0,0,0.7}
\definecolor{Mygray}{rgb}{0.75,0.75,0.75}
\definecolor{auburn}{rgb}{0.43, 0.21, 0.1}
\definecolor{britishracinggreen}{rgb}{0.0, 0.26, 0.15}
\definecolor{taupe}{rgb}{0.28, 0.24, 0.2}
\newtheorem{theorem}{Theorem}[section]
\newtheorem{proposition}{Proposition}[section]
\newtheorem{lemma}{Lemma}[section]
\newtheorem{defn}{Definition}[section]
\newtheorem{remark}{Remark}[section]
\newtheorem*{ack}{Acknowledgements}
\tikzstyle{startstop} = [rectangle, rounded corners, minimum width=3cm, minimum height=1cm,text centered, draw=black, fill=white!30]
\tikzstyle{io} = [trapezium, trapezium left angle=70, trapezium right angle=110, minimum width=3cm, minimum height=1cm, text centered, draw=black, fill=white!30]
\tikzstyle{process} = [rectangle, minimum width=2cm, minimum height=1cm, text centered, draw=black, fill=white!30]
\tikzstyle{decision} = [rectangle, minimum width=1cm, minimum height=1cm, text centered, draw=black, fill=white!30]
\tikzstyle{arrow} = [thick,->,>=stealth]
\begin{document}
\baselineskip=14.5pt
\title[Bi-quadratic P\'{o}lya fields]{Bi-quadratic P\'{o}lya fields with five distinct ramified primes}

\author{Md. Imdadul Islam, Jaitra Chattopadhyay and Debopam Chakraborty}
\address[Md. Imdadul Islam and Debopam Chakraborty]{Department of Mathematics, BITS-Pilani, Hyderabad campus, Hyderabad, INDIA}
\address[Jaitra Chattopadhyay]{Department of Mathematics, Siksha Bhavana, Visva-Bharati, Santiniketan - 731235, West Bengal, India}

\email[Md. Imdadul Islam]{p20200059@hyderabad.bits-pilani.ac.in}

\email[Jaitra Chattopadhyay]{jaitra.chattopadhyay@visva-bharati.ac.in; chat.jaitra@gmail.com} 

\email[Debopam Chakraborty]{debopam@hyderabad.bits-pilani.ac.in}

\begin{abstract}
For an algebraic number field $K$, the P\'{o}lya group of $K$, denoted by $Po(K),$ is the subgroup of the ideal class group $Cl_{K}$ generated by the ideal classes of the products of prime ideals of same norm. The number field $K$ is said to be P\'{o}lya if $Po(K)$ is trivial. Motivated by several recent studies on the group $Po(K)$ when $K$ is a totally real bi-quadratic field, we investigate the same with five distinct odd primes ramifying in $K/\mathbb{Q}$. This extends the previous results on this problem, where the number of distinct ramified primes was at most four.  
\end{abstract}

\maketitle

\section{introduction}

Let $K$ be an algebraic number field with ring of integers $\mathcal{O}_{K}$. Let $Cl_{K}$ be the ideal class group of $K$ with class number $h_{K}$. For every rational prime $p$ and integer $f \geq 1$, The notion of \textit{Ostrowski ideals} $\Pi_{p^{f}}(K)$ of $K$ was defined in \cite{ost}, in the following way.
\begin{equation*}
\displaystyle\Pi_{p^{f}}(K) = \displaystyle\prod_{\substack {\mathfrak{p} \in {\rm{Max}}(\mathcal{O}_{K})\\ N_{K/\mathbb{Q}} (\mathfrak{m}) = p^{f}}} \mathfrak{m}. 
\end{equation*}
$K$ is called a \textit{P\'{o}lya field} (cf. \cite{pol}, \cite{zan}) if all the Ostrowski ideals $\Pi_{p}^{f}(K)$ for arbitrary prime powers are principal. To measure the extent of failure of the Ostrowski ideals from being principal, the notion of {\it P\'{o}lya group} is defined as follows. 
\begin{defn} ((cf. \cite{cah1}, \textsection II.4))\label{def0}
  The P\'{o}lya group of a number field $K$ is the subgroup $Po(K)$ of the ideal class group $Cl(K)$ generated by the ideal classes of all the Ostrowski ideals $\Pi_{p}^{f}(K)$.  
\end{defn}
    Works of Brumer and Rosen \cite{bru} showed that for a finite Galois extension $K/\mathbb{Q}$, the Ostrowski ideals freely generate the ambiguous ideals. Therefore, in this case, $Po(K)$ can also be regarded as a subgroup of $Cl(K)$ generated by the classes of ambiguous ideals.  An alternative definition of P\'{o}lya field arises from the behavior of \textit{integer-valued polynomials} over $\mathcal{O}_{K}$ in the following way. Let $$\text{Int}(\mathcal{O}_{K}) = \{f(X) \in K[X] \text{ } : f(\mathcal{O}_{K}) \subseteq \mathcal{O}_{K}\}$$ be the ring of integer-valued polynomials on $K$. Due to the works of Cahen et. al. (cf. \cite{cah1}, \cite{cah2}), it is known that $\text{Int}(\mathcal{O}_{K})$ is a free $\mathcal{O}_{K}$-module. A \textit{regular basis} of $\text{Int}(\mathcal{O}_{K})$ is an $\mathcal{O}_{K}$-basis $\{f_{i}\}_{i \geq 0}$ such that degree$(f_{i}) = i$ for all $i \geq 0$. 

\begin{defn} \cite[Definition II.4.1]{cah1}
     $K$ is said to be a P\'{o}lya field if the $\mathcal{O}_{K}$-module $\text{Int}(\mathcal{O}_{K})$ has a regular basis. 
\end{defn}

It is evident from Definition \ref{def0} that ramified primes play a significant role in determining the P\'{o}lya group $Po(K)$ of a number field $K$. For quadratic fields, the following result of Hilbert is one of the earliest results related to P\'{o}lya groups. The result was later generalized in \cite[Proposition 3.1]{zan} by Zantema for arbitrary finite Galois extensions of $\mathbb{Q}$ . 

\begin{proposition}\cite[Theorem 106]{hil}
    Let $K$ be a quadratic field and denote the number of ramified primes in $K/ \mathbb{Q}$ by $s_{K}$. If $K$ is real and the fundamental unit of $K$ is of norm $1$, then $|Po(K)| = 2^{s_{K}-2}$. Otherwise we have $|Po(K)| = 2^{s_{K}-1}$.
\end{proposition}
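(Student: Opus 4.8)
The plan is to identify $Po(K)$ with a quotient of an explicit elementary abelian $2$-group by the first cohomology of the unit group of $K$, after which the dichotomy in the statement comes purely from the value of the norm of the fundamental unit. Write $G=\mathrm{Gal}(K/\mathbb{Q})=\{1,\sigma\}$, let $\mathcal{I}_{K}$ be the group of nonzero fractional $\mathcal{O}_{K}$-ideals, $P_{K}\subseteq\mathcal{I}_{K}$ the subgroup of principal ones, and $\mathcal{I}_{K}^{G}$, $P_{K}^{G}$ their $G$-invariant subgroups. Each Ostrowski ideal $\Pi_{p^{f}}(K)$, being the product of all primes of a fixed norm, is $G$-invariant; and by the result of Brumer and Rosen recalled above the $\Pi_{p^{f}}(K)$ freely generate $\mathcal{I}_{K}^{G}$. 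Hence $Po(K)$ is exactly the image of $\mathcal{I}_{K}^{G}$ in $Cl_{K}$, so that $Po(K)\cong\mathcal{I}_{K}^{G}/(\mathcal{I}_{K}^{G}\cap P_{K})=\mathcal{I}_{K}^{G}/P_{K}^{G}$.

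Next I would compare both $\mathcal{I}_{K}^{G}$ and $P_{K}^{G}$ with the subgroup $\operatorname{im}(j)$, where $j\colon\mathbb{Q}^{\times}\to\mathcal{I}_{K}^{G}$ sends $q\mapsto q\mathcal{O}_{K}$ (note $\operatorname{im}(j)\subseteq P_{K}^{G}$). For a rational prime $p$ one has $p\mathcal{O}_{K}=\Pi_{p}(K)$ if $p$ splits, $p\mathcal{O}_{K}=\Pi_{p^{2}}(K)$ if $p$ is inert, and $p\mathcal{O}_{K}=\mathfrak{p}^{2}$ with $\Pi_{p}(K)=\mathfrak{p}$ if $p$ ramifies; reading this against the free generating set of $\mathcal{I}_{K}^{G}$ given by the Ostrowski ideals shows $\mathcal{I}_{K}^{G}/\operatorname{im}(j)\cong(\mathbb{Z}/2\mathbb{Z})^{s_{K}}$, one copy of $\mathbb{Z}/2\mathbb{Z}$ for each ramified prime. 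On the other hand $\operatorname{im}(j)$ is precisely the image of $(K^{\times})^{G}=\mathbb{Q}^{\times}$ in $P_{K}$, so the long exact sequence of $G$-cohomology attached to $1\to\mathcal{O}_{K}^{\times}\to K^{\times}\to P_{K}\to 1$, together with Hilbert's Theorem $90$ (which gives $H^{1}(G,K^{\times})=0$), yields $P_{K}^{G}/\operatorname{im}(j)\cong H^{1}(G,\mathcal{O}_{K}^{\times})$. Since $\operatorname{im}(j)\subseteq P_{K}^{G}\subseteq\mathcal{I}_{K}^{G}$, dividing one index by the other gives the clean formula
\[
 |Po(K)|=\bigl[\mathcal{I}_{K}^{G}:P_{K}^{G}\bigr]=\frac{2^{\,s_{K}}}{\bigl|H^{1}(G,\mathcal{O}_{K}^{\times})\bigr|}.
\]

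It remains to compute $H^{1}(G,\mathcal{O}_{K}^{\times})$, and this is where the two cases appear. As $G$ is cyclic of order $2$, $H^{1}(G,\mathcal{O}_{K}^{\times})=\{u\in\mathcal{O}_{K}^{\times}:u\,\sigma(u)=1\}\big/\{\sigma(v)/v:v\in\mathcal{O}_{K}^{\times}\}$. If $K$ is imaginary, $\mathcal{O}_{K}^{\times}=\mu_{K}$, every root of unity has norm $1$ and $\sigma(v)/v=v^{-2}$, so the group is $\mu_{K}/\mu_{K}^{2}\cong\mathbb{Z}/2\mathbb{Z}$ (using $-1\in\mu_{K}$; the cases $\mathbb{Q}(i)$ and $\mathbb{Q}(\sqrt{-3})$ are checked directly and give the same order). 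If $K$ is real, write $\mathcal{O}_{K}^{\times}=\{\pm1\}\times\langle\varepsilon\rangle$ with $\varepsilon$ the fundamental unit: when $N_{K/\mathbb{Q}}(\varepsilon)=-1$ one has $\sigma(\varepsilon)=-\varepsilon^{-1}$, the norm-one units are $\{\pm\varepsilon^{2k}\}$ and the coboundaries are $\langle-\varepsilon^{2}\rangle$, so the quotient is $\mathbb{Z}/2\mathbb{Z}$; when $N_{K/\mathbb{Q}}(\varepsilon)=+1$ one has $\sigma(\varepsilon)=\varepsilon^{-1}$, every unit has norm $1$, the coboundaries are $\langle\varepsilon^{2}\rangle$, and the quotient is $(\{\pm1\}\times\langle\varepsilon\rangle)/\langle\varepsilon^{2}\rangle\cong(\mathbb{Z}/2\mathbb{Z})^{2}$. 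Substituting $|H^{1}(G,\mathcal{O}_{K}^{\times})|=4$ in the last case and $=2$ in all others into the displayed formula gives $|Po(K)|=2^{s_{K}-2}$ when $K$ is real with a fundamental unit of norm $1$, and $|Po(K)|=2^{s_{K}-1}$ otherwise, as claimed. The one step demanding genuine care is this computation of $H^{1}(G,\mathcal{O}_{K}^{\times})$: it is precisely the split there — real with a norm-$(+1)$ fundamental unit versus every other case — that produces the two different exponents, so this is the crux; the structural reduction in the first two paragraphs is formal.
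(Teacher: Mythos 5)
Your argument is correct, and there is nothing in the paper to compare it against line by line: the paper states this proposition with a citation to Hilbert (Theorem 106) and Zantema and gives no proof of its own. Your route is the standard one that underlies the paper's toolkit. The first two paragraphs of your proposal in effect rederive, in the quadratic case, the exact sequence $0 \to H^{1}(G,\mathcal{O}_{K}^{*}) \to \bigoplus_{i}\mathbb{Z}/e_{i}\mathbb{Z} \to Po(K) \to 0$ that the paper quotes as Proposition \ref{prop-zan}: you identify $Po(K)$ with the ambiguous ideals modulo $G$-invariant principal ideals via Brumer--Rosen, measure $\mathcal{I}_{K}^{G}$ against the extended rational ideals (one factor $\mathbb{Z}/2\mathbb{Z}$ per ramified prime, none from split or inert primes), and use Hilbert 90 on $1\to\mathcal{O}_{K}^{\times}\to K^{\times}\to P_{K}\to 1$ to get $P_{K}^{G}/\operatorname{im}(j)\cong H^{1}(G,\mathcal{O}_{K}^{\times})$, whence $|Po(K)|=2^{s_{K}}/|H^{1}(G,\mathcal{O}_{K}^{\times})|$. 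Your computation of $H^{1}(G,\mathcal{O}_{K}^{\times})$ is also correct, including the sign bookkeeping $\sigma(\varepsilon)=-\varepsilon^{-1}$ versus $\sigma(\varepsilon)=\varepsilon^{-1}$ and the imaginary fields with extra roots of unity, giving order $4$ exactly when $K$ is real with a norm-one fundamental unit and order $2$ otherwise; this yields the stated dichotomy $2^{s_{K}-2}$ versus $2^{s_{K}-1}$. The only remark is economy: granting Proposition \ref{prop-zan}, your first two paragraphs could be replaced by a direct appeal to that exact sequence, with your argument serving precisely as its proof in the quadratic case.
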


    Recently, works of Chabert (cf. \cite{cha}) and Maarefparvar et. al. (cf. \cite{mar3}) generalized the above result independently, introducing concepts like \textit{relative P\'{o}lya group} along the way. While the \textit{class number one problem} is still unresolved for real quadratic fields, there are many results regarding trivial P\'{o}lya group for number fields of small degrees. Quadratic P\'{o}lya fields have been characterized through the work of Zantema in \cite{zan}. Later, in \cite{ler}, Leriche did a similar characterization for cyclic cubic and cyclic quartic P\'{o}lya fields. In that same work, Leriche also classified bi-quadratic fields that are compositum of two quadratic P\'{o}lya fields \cite[Theorem 5.4]{ler}. 
    
\smallskip

The size of $Po(K)$ for a totally real bi-quadratic field has been studied extensively in recent years. In \cite{hei1}, Heideryan and Rajaei discussed bi-quadratic P\'{o}lya fields with exactly one quadratic P\'{o}lya subfield. In \cite{hei2}, the same authors constructed non-P\'{o}lya bi-quadratic fields having low ramification. P\'{o}lya groups of small degree number fields were also discussed in \cite{mar1}, \cite{mar2} and \cite{tou}. The second author of this article and Saikia constructed totally real bi-quadratic fields with large P\'{o}lya groups (cf. \cite{cha1}) and determined $Po(K)$ for some families of bi-quadratic fields $K$ with a non-principal Euclidean ideal class (cf. \cite{cha2}). Borrowing the notion of {\it consecutive number fields}, originally proposed by Iizuka in \cite{iiz} for imaginary quadratic fields, the authors of this article have recently constructed simplest cubic and consecutive bi-quadratic fields with large P\'{o}lya groups in \cite{isl}. 

\smallskip

It is well-known that if the number of ramified primes in a totally real bi-quadratic field $K$ is more than five, then $Po(K)$ is non-trivial. Thus, the classification of totally real bi-quadratic field P\'{o}lya fields only deals with at most five ramifications. Many results are available when the number is less than five, but not many with exactly five ramifications. The following result due to Maarefparvar and Rajaei is concerned with bi-quadratic fields with precisely four distinct ramified primes.
    \begin{proposition}\cite[Theorem 4.1]{mar3}
        Let $K = \mathbb{Q}(\sqrt{pq}, \sqrt{qrs})$, where $p,q,r$ and $s$ are prime numbers with $p \equiv 3 \pmod 8$, $q \equiv s \equiv 5 \pmod 8$ and $r \equiv 7 \pmod 8$. Then $Po(K) \simeq (\mathbb{Z}/ 2 \mathbb{Z})^{t}$ for $t \in \{0,1,2\}$. Furthermore, depending on various conditions on the Legendre symbols among these primes, each case can occur. 
    \end{proposition}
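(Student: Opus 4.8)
The plan is to push everything through the Brumer--Rosen picture recalled above: as $K/\mathbb{Q}$ is Galois with group $G\cong(\mathbb{Z}/2\mathbb{Z})^{2}$, the group $Po(K)$ is generated by the classes $c_{\ell}$ of the Ostrowski (equivalently, ambiguous) ideals attached to the rational primes $\ell$ ramified in $K/\mathbb{Q}$. First I would identify the three quadratic subfields as $K_{1}=\mathbb{Q}(\sqrt{pq})$, $K_{2}=\mathbb{Q}(\sqrt{qrs})$, $K_{3}=\mathbb{Q}(\sqrt{prs})$, and read off from the congruences that $pq\equiv qrs\equiv 7$ and $prs\equiv 1\pmod 8$; hence $2$ is partially ramified in $K$ with $e_{2}=2$, and each of $p,q,r,s$ is tamely ramified with $e_{\ell}=2$. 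So the ramified primes are exactly $2,p,q,r,s$, the square of each Ostrowski ideal is a principal ideal $(\ell)$, and every $c_{\ell}$ has order dividing $2$. This already yields $Po(K)\cong(\mathbb{Z}/2\mathbb{Z})^{t}$ for some $t\ge 0$; it remains to prove $t\le 2$ and that $t=0,1,2$ all occur. (One also notes that the residue degree $f_{\ell}$ of an odd ramified prime $\ell$ is $1$ or $2$ according to whether $\ell$ splits in the unique quadratic subfield in which $\ell$ is unramified, i.e.\ according to a Legendre symbol, but this does not affect $c_{\ell}$.)

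For the bound $t\le 2$ I would compare $K$ with $K_{2}$, the subfield with the most ramified primes. Because $\sqrt{pq},\sqrt{qrs},\sqrt{prs}\in K$, a prime-support computation gives the relations $c_{p}c_{q}=c_{q}c_{r}c_{s}=c_{p}c_{r}c_{s}=1$, whence $c_{p}=c_{q}$ and $c_{s}=c_{q}c_{r}$, so $Po(K)=\langle c_{2},c_{q},c_{r},c_{s}\rangle$. For each $\ell\in\{2,q,r,s\}$ --- precisely the primes ramified in $K_{2}$ --- one checks that $c_{\ell}=j_{2}([\mathfrak{p}_{\ell}])$, where $\mathfrak{p}_{\ell}$ is the ramified prime of $K_{2}$ above $\ell$ and $j_{2}\colon Cl_{K_{2}}\to Cl_{K}$ is extension of ideals (valid whether $\mathfrak{p}_{\ell}$ splits or is inert in $K/K_{2}$, since $e_{\mathfrak{p}_{\ell}}(K/K_{2})=1$). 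As $Po(K_{2})$ is generated by the classes $[\mathfrak{p}_{\ell}]$, it follows that $Po(K)=j_{2}(Po(K_{2}))$ is a quotient of $Po(K_{2})$. Now $r\equiv 3\pmod 4$ and $r$ ramifies in $K_{2}$, so $-1$ is not a local norm at $r$ for $K_{2}/\mathbb{Q}$; by the Hasse norm theorem, the fundamental unit $\varepsilon_{2}$ of $K_{2}$ has norm $+1$, and Hilbert's theorem gives $|Po(K_{2})|=2^{\,4-2}=4$ since $K_{2}$ has four ramified primes. Hence $|Po(K)|\le 4$, i.e.\ $t\le 2$.

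To realise the three values, write $t=2-\dim_{\mathbb{F}_{2}}(\ker j_{2}\cap Po(K_{2}))$; the problem becomes that of deciding which of the three nontrivial classes of $Po(K_{2})\cong(\mathbb{Z}/2\mathbb{Z})^{2}$ capitulate in the quadratic extension $K/K_{2}=K_{2}(\sqrt{pq})$. I would settle this by genus theory of $K/K_{2}$: a product of the ramified prime classes $[\mathfrak{p}_{\ell}]$ of $K_{2}$ dies in $Cl_{K}$ exactly when the corresponding element becomes a norm from $K^{\times}$ to $K_{2}^{\times}$, and unwinding the Hasse norm conditions for $K/K_{2}$ --- together with the relations above, the principal ideals $(\sqrt{pq})$, $(\sqrt{qrs})$, and the ambiguous class number formula for $K/K_{2}$ --- expresses this through the residue symbols $\big(\tfrac{p}{q}\big),\big(\tfrac{p}{r}\big),\big(\tfrac{p}{s}\big),\big(\tfrac{q}{r}\big)$, etc. A case analysis on these symbols shows the capitulation kernel has dimension $0$, $1$ or $2$ in suitable cases, and Dirichlet's theorem (used to produce primes $p,q,r,s$ obeying the congruences mod $8$ together with each admissible pattern of Legendre symbols) then realises $t=0,1,2$.

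The crux is this last step: computing $\ker j_{2}\cap Po(K_{2})$ exactly. Equivalently, one must determine the order of $H^{1}(G,\mathcal{O}_{K}^{\times})$ in the exact sequence $0\to H^{1}(G,\mathcal{O}_{K}^{\times})\to\bigoplus_{\ell \text{ ramified}}\mathbb{Z}/e_{\ell}\mathbb{Z}\to Po(K)\to 0$, which here reads $0\to H^{1}(G,\mathcal{O}_{K}^{\times})\to(\mathbb{Z}/2\mathbb{Z})^{5}\to Po(K)\to 0$; the delicate input is the structure of the unit group $\mathcal{O}_{K}^{\times}$ of the biquadratic field --- Kubota's unit index $[\mathcal{O}_{K}^{\times}:\langle-1,\varepsilon_{1},\varepsilon_{2},\varepsilon_{3}\rangle]$ and which of the units $\sqrt{\varepsilon_{i}\varepsilon_{j}}$, $\sqrt{\varepsilon_{1}\varepsilon_{2}\varepsilon_{3}}$ actually lie in $K$ --- since it is precisely these data that the congruences mod $8$ and the Legendre symbol conditions control. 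Everything else reduces to bookkeeping with genus theory and Hilbert's classical formula for real quadratic fields.
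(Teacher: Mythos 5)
This proposition is quoted from \cite{mar3} and is not reproved in the present paper, so the relevant comparison is with the machinery of Section 2 (Proposition \ref{prop-zan}, Lemmas \ref{lem-zan1}--\ref{lem1}), which is also essentially how the cited source argues. Your structural half is correct and takes a genuinely different, rather economical route to the upper bound: the congruences give $pq\equiv qrs\equiv 7\pmod 8$ and $prs\equiv 1\pmod 8$, so the ramified primes are exactly $2,p,q,r,s$ with $e_{\ell}=2$; the principal ideals $(\sqrt{pq})$, $(\sqrt{qrs})$ yield $c_{p}=c_{q}$ and $c_{s}=c_{q}c_{r}$; since the ramified primes of $K_{2}=\mathbb{Q}(\sqrt{qrs})$ are unramified in $K/K_{2}$, each $c_{\ell}$ ($\ell\in\{2,q,r,s\}$) is the extension of the corresponding ambiguous class of $K_{2}$, so $Po(K)=j_{2}(Po(K_{2}))$; and $r\equiv 3\pmod 4$ forces $N(\varepsilon_{2})=1$, so Hilbert's formula (quoted in the introduction) gives $|Po(K_{2})|=2^{4-2}=4$, whence $t\le 2$. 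This replaces the computation of $H^{1}(G,\mathcal{O}_{K}^{*})$ by a push-forward of a quadratic P\'{o}lya group, and for the bound alone it is a clean alternative.

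The genuine gap is the second assertion of the proposition --- that each of $t=0,1,2$ actually occurs under suitable Legendre-symbol conditions --- which is the substantive content of \cite[Theorem 4.1]{mar3} and which your proposal only plans rather than proves. You correctly reduce it to determining $\ker j_{2}\cap Po(K_{2})$ (equivalently $|H^{1}(G,\mathcal{O}_{K}^{*})|$ in the exact sequence, equivalently Kubota's unit index for $K$), but then simply assert that ``a case analysis on these symbols shows'' the kernel has dimension $0$, $1$ or $2$ in suitable cases. No case is analysed: no norm equation of the shape $t_{1}m^{2}-t_{2}n^{2}=1$ is ruled in or out by the symbols, no square class $[N(u_{i}+1)]$ is pinned down, no explicit symbol configuration is exhibited for any of the three values of $t$, and nothing excludes the a priori possibility that under these congruences the capitulation kernel is always of one fixed dimension; Dirichlet/CRT only supplies primes realizing a prescribed symbol pattern, it does not show that the patterns force three different kernel dimensions. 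Closing the gap requires exactly the kind of work done in Lemmas \ref{lem0}--\ref{lem1} and the proof of Theorem \ref{mainthm}: after noting via Lemma \ref{lem-zan1} that $H^{1}(G,\mathcal{O}_{K}^{*})=H^{1}(G,\mathcal{O}_{K}^{*})[2]$ (here $2$ ramifies but is not totally ramified), one must compute the classes $[a_{i}]=[N(u_{i}+1)]$ for the three quadratic subfields in terms of the symbols among $p,q,r,s$ --- and since $pq\equiv qrs\equiv 3\pmod 4$ these square classes can also involve the prime $2$, a wrinkle absent from the paper's lemmas --- and then display one admissible configuration for each of $t=0,1,2$. As written, only $Po(K)\simeq(\mathbb{Z}/2\mathbb{Z})^{t}$ with $t\le 2$ is established.
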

In this paper, we deal with totally real bi-quadratic fields with precisely five distinct odd ramified primes. We provide a complete classification for such P\'{o}lya bi-quadratic fields where the discriminant of one quadratic subfield comprises of three distinct odd primes. More precisely, we prove the following theorem.
\begin{theorem}\label{mainthm}
Let $p_{1} \equiv p_{2} \equiv q_{1} \equiv q_{2} \equiv 3 \pmod 4$, $p_{3} \equiv 1 \pmod 4$ are distinct prime numbers such that $q_{1}q_{2} \equiv p_{1}p_{2}p_{3} \equiv 1 \pmod 8$ and let $K = \mathbb{Q}(\sqrt{q_{1}q_{2}}, \sqrt{p_{1}p_{2}p_{3}})$. Then $K$ is a P\'{o}lya field in each of the following cases.
\begin{enumerate}
\item $\big(\frac{p_{1}}{p_{2}}\big) = 1$, $\big(\frac{p_{1}}{p_{3}}\big) = 1$, $\big(\frac{p_{2}}{p_{3}}\big) = -1$, $\big(\frac{p_{3}}{q_{1}}\big) = \big(\frac{p_{3}}{q_{2}}\big) = \big(\frac{p_{1}}{q_{1}}\big) = \big(\frac{p_{1}}{q_{2}}\big) = -1$ and $\big(\frac{q_{1}q_{2}}{p_{2}}\big) = -1$. \\
\item $\big(\frac{p_{1}}{p_{2}}\big) = 1$, $\big(\frac{p_{1}}{p_{3}}\big) = -1$, $\big(\frac{p_{2}}{p_{3}}\big) = 1$, $\big(\frac{p_{3}}{q_{1}}\big) = \big(\frac{p_{3}}{q_{2}}\big) = - \big(\frac{p_{1}}{q_{1}}\big) = - \big(\frac{p_{1}}{q_{2}}\big) = -1,$ and $\big(\frac{q_{1}q_{2}}{p_{2}}\big) = -1$. \\
\item $\big(\frac{p_{1}}{p_{3}}\big) = -1$, $\big(\frac{p_{2}}{p_{3}}\big) = -1$, $\big(\frac{p_{3}}{q_{1}}\big) = - \big(\frac{p_{3}}{q_{2}}\big) = 1$ and $\big(\frac{p_{1}p_{2}}{q_{1}}\big) = \big(\frac{p_{1}p_{2}}{q_{2}}\big) = - \big(\frac{q_{1}q_{2}}{p_{1}}\big) = -1$. \\
\item $\big(\frac{p_{1}}{p_{2}}\big) = -1$, $\big(\frac{p_{1}}{p_{3}}\big) = 1$, $\big(\frac{p_{2}}{p_{3}}\big) = -1$, $\big(\frac{p_{3}}{q_{1}}\big) = \big(\frac{p_{3}}{q_{2}}\big) = -1$ and $\big(\frac{p_{1}p_{2}}{q_{1}}\big) = \big(\frac{p_{1}p_{2}}{q_{2}}\big) = - \big(\frac{q_{1}q_{2}}{p_{1}}\big) = 1$. \\
\item $\big(\frac{p_{1}}{p_{2}}\big) = -1$, $\big(\frac{p_{1}}{p_{3}}\big) = -1$, $\big(\frac{p_{2}}{p_{3}}\big) = 1$,$\big(\frac{p_{3}}{q_{1}}\big) = \big(\frac{p_{3}}{q_{2}}\big) = \big(\frac{p_{1}}{q_{1}}\big) = \big(\frac{p_{1}}{q_{2}}\big) = -1$ and $\big(\frac{q_{1}q_{2}}{p_{2}}\big) = - 1$.
\end{enumerate}
Moreover, if $ \big(\frac{p_{1}}{p_{3}}\big) = \big(\frac{p_{2}}{p_{3}}\big) = 1$, then either $K$ is a P\'{o}lya field or $Po(K) \simeq \mathbb{Z}/ 2 \mathbb{Z}$.  
\end{theorem}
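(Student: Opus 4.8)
The plan is to analyze $Po(K)$ through its ambiguous ideals and push the entire computation down to the quadratic subfield $k_{2} = \mathbb{Q}(\sqrt{p_{1}p_{2}p_{3}})$. Write $k_{1} = \mathbb{Q}(\sqrt{q_{1}q_{2}})$, $k_{2} = \mathbb{Q}(\sqrt{p_{1}p_{2}p_{3}})$ and $k_{3} = \mathbb{Q}(\sqrt{q_{1}q_{2}p_{1}p_{2}p_{3}})$ for the three quadratic subfields of $K$. Since $q_{1}q_{2} \equiv p_{1}p_{2}p_{3} \equiv 1 \pmod 8$, the prime $2$ is unramified in each $k_{i}$ and hence in $K$, while each of the five odd primes ramifies in exactly two of the $k_{i}$ and so has inertia group of order $2$ in $\mathrm{Gal}(K/\mathbb{Q})$. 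By the theorem of Brumer and Rosen \cite{bru}, $Po(K)$ is generated by the classes of the Ostrowski ideals attached to these ramified primes; for each ramified prime $\ell$ the associated ambiguous class $c_{\ell}$ satisfies $c_{\ell}^{2} = [\ell\mathcal{O}_{K}] = 1$, so $Po(K)$ is an $\mathbb{F}_{2}$-vector space generated by $c_{q_{1}}, c_{q_{2}}, c_{p_{1}}, c_{p_{2}}, c_{p_{3}}$.

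First I would eliminate the generators coming from $q_{1}, q_{2}$. Because $q_{1} \equiv q_{2} \equiv 3 \pmod 4$, the negative Pell equation for $k_{1}$ is unsolvable, so its fundamental unit has norm $+1$; Hilbert's result quoted above \cite{hil} then gives $|Po(k_{1})| = 2^{s_{k_{1}}-2} = 2^{2-2} = 1$, i.e. $k_{1}$ is a P\'{o}lya field. In particular the ramified prime $\mathfrak{q}_{i}$ above $q_{i}$ in $k_{1}$ is principal, and since the relevant Ostrowski ideal of $q_{i}$ in $K$ is exactly $\mathfrak{q}_{i}\mathcal{O}_{K}$ (in both the split and inert cases of $q_{i}$ in $k_{2}$), extension of a principal ideal gives $c_{q_{i}} = 1$. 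Hence $Po(K) = \langle c_{p_{1}}, c_{p_{2}}, c_{p_{3}}\rangle$, and since each $c_{p_{j}}$ is the image $\iota_{2}([\mathfrak{P}_{j}])$ of the class of the ramified prime $\mathfrak{P}_{j}$ of $k_{2}$ under extension of ideals $\iota_{2}\colon Cl(k_{2}) \to Cl(K)$, we obtain $Po(K) = \iota_{2}(Po(k_{2}))$.

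Next I would compute $Po(k_{2})$. As above $N(\varepsilon_{k_{2}}) = +1$, because $p_{1} \equiv 3 \pmod 4$ divides the discriminant, so Hilbert's formula gives $|Po(k_{2})| = 2^{3-2} = 2$; thus $Po(k_{2}) \simeq \mathbb{Z}/2\mathbb{Z}$, generated by $[\mathfrak{P}_{1}], [\mathfrak{P}_{2}], [\mathfrak{P}_{3}]$ subject to $[\mathfrak{P}_{1}][\mathfrak{P}_{2}][\mathfrak{P}_{3}] = 1$. Ordinary genus theory for $\mathbb{Q}(\sqrt{p_{1}p_{2}p_{3}})$ expresses the principality of each $\mathfrak{P}_{j}$ through the residue symbols $\big(\frac{p_{1}}{p_{2}}\big), \big(\frac{p_{1}}{p_{3}}\big), \big(\frac{p_{2}}{p_{3}}\big)$, which is exactly the data separating the cases of the theorem; this pins down the non-trivial class generating $Po(k_{2})$. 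Since $Po(K) = \iota_{2}(Po(k_{2}))$ is a quotient of $\mathbb{Z}/2\mathbb{Z}$, the bound $|Po(K)| \leq 2$ holds unconditionally under the hypotheses, which already yields the final ``moreover'' assertion in the regime $\big(\frac{p_{1}}{p_{3}}\big) = \big(\frac{p_{2}}{p_{3}}\big) = 1$.

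It then remains, for each of the cases (1)--(5), to show that the non-trivial class of $Po(k_{2})$ \emph{capitulates} in $K = k_{2}(\sqrt{q_{1}q_{2}})$, i.e. that the corresponding ambiguous ideal $\mathfrak{P}$ becomes principal in $\mathcal{O}_{K}$; this is the main obstacle. Writing $\mathfrak{P}^{2} = (p_{j})$ in $k_{2}$, principality of $\mathfrak{P}\mathcal{O}_{K}$ is equivalent, via Hilbert $90$ and the Hasse norm theorem for the quadratic extension $K/k_{2}$, to $p_{j}$ (or the relevant product $p_{i}p_{j}$) lying in $N_{K/k_{2}}(K^{\times})$ up to units, a condition that unwinds into local norm-residue conditions at the ramified places. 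These translate precisely into the mixed Legendre symbols of the hypotheses, namely $\big(\frac{p_{3}}{q_{i}}\big)$, $\big(\frac{p_{1}}{q_{i}}\big)$, $\big(\frac{p_{1}p_{2}}{q_{i}}\big)$ and $\big(\frac{q_{1}q_{2}}{p_{i}}\big)$, together with quadratic reciprocity and the prescribed congruences modulo $8$. I would then verify the resulting conditions case by case: in each of (1)--(5) they force $\mathfrak{P}\mathcal{O}_{K}$ to be principal, whence $Po(K) = \iota_{2}(Po(k_{2})) = 1$ and $K$ is a P\'{o}lya field. The delicate point, and the place where the individual hypotheses of the five cases are genuinely used, is the correct bookkeeping of signs in these Hilbert-symbol identities, where the primes $\equiv 3 \pmod 4$ and the archimedean place must be tracked carefully.
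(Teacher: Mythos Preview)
Your reduction is correct and cleaner than what the paper does: from $k_{1}=\mathbb{Q}(\sqrt{q_{1}q_{2}})$ being P\'olya you legitimately kill the classes $c_{q_{1}},c_{q_{2}}$, and the identification $Po(K)=\iota_{2}(Po(k_{2}))$ with $|Po(k_{2})|=2$ gives $|Po(K)|\le 2$ under the standing hypotheses alone. This already implies the ``moreover'' clause (and in fact a bit more). The paper gets to the same place by a different route: it uses Zantema's isomorphism $H^{1}(G,\mathcal{O}_{K}^{*})[2]\simeq\langle[\Delta_{1}],[\Delta_{2}],[\Delta_{3}],[a_{1}],[a_{2}],[a_{3}]\rangle\subset\mathbb{Q}^{*}/(\mathbb{Q}^{*})^{2}$ together with Lemma~\ref{lem0} ($[a_{1}]\in\{[q_{1}],[q_{2}]\}$) and Lemma~\ref{lem1} ($[a_{2}]$ a proper nontrivial divisor of $p_{1}p_{2}p_{3}$), which forces $|H^{1}|\ge 16$ and hence $|Po(K)|\le 2$.

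The gap is in your final step. You assert that principality of $\mathfrak{P}\mathcal{O}_{K}$ is \emph{equivalent}, via Hilbert~90 and the Hasse norm theorem for $K/k_{2}$, to $p_{j}$ being a norm from $K$ up to units. Only one implication holds: if $\mathfrak{P}\mathcal{O}_{K}=(\alpha)$ then $N_{K/k_{2}}(\alpha)$ generates $\mathfrak{P}^{2}=(p_{j})$, so $p_{j}$ is a norm up to a unit of $k_{2}$. The converse fails in general: $N_{K/k_{2}}((\alpha))=\mathfrak{P}^{2}\mathcal{O}_{K}$ does not force $(\alpha)=\mathfrak{P}\mathcal{O}_{K}$, and Hilbert~90 for $K^{*}$ says nothing about this. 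Capitulation of an order-$2$ class $[\mathfrak{P}]$ with $\mathfrak{P}^{2}=(p_{j})$ is equivalent to $p_{j}$ being a square in $K^{*}$ \emph{up to a unit of $\mathcal{O}_{K}$}, which is a global condition on the full unit group of $K$, not a local norm condition on $K/k_{2}$. So local Hilbert-symbol computations cannot, by themselves, certify that the nontrivial class of $Po(k_{2})$ dies in $K$.

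This is exactly where the paper's argument differs and where the work lies. The missing global input is the fundamental unit $u_{3}=x_{3}+y_{3}\sqrt{q_{1}q_{2}p_{1}p_{2}p_{3}}$ of the \emph{third} quadratic subfield $k_{3}$, which your outline never touches. From $N(u_{3})=1$ the paper extracts a factorisation $\tfrac{x_{3}+1}{2}=t_{1}m^{2}$, $\tfrac{x_{3}-1}{2}=t_{2}n^{2}$ with $t_{1}t_{2}=q_{1}q_{2}p_{1}p_{2}p_{3}$ and $t_{1}m^{2}-t_{2}n^{2}=1$, and then uses the Legendre-symbol hypotheses of each case to rule out enough values of $t_{1}$ so that the surviving possibilities for $[a_{3}]=[t_{1}]$ all make $\langle[q_{1}],[q_{2}],[p_{1}p_{2}p_{3}],[a_{2}],[a_{3}]\rangle$ have order $32$, hence $Po(K)=1$. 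In your language this is precisely the capitulation statement, but the mechanism is the unit of $k_{3}$ feeding into Zantema's description of $H^{1}(G,\mathcal{O}_{K}^{*})$, not a norm criterion for $K/k_{2}$. To repair your argument you would need to invoke $u_{3}$ (or an equivalent global datum) rather than the Hasse norm theorem.
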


\begin{remark}\label{rem0}
First, we note the well-known fact that if a real bi-quadratic field $K$ has more than or equal to six distinct rational primes that ramify in $K/ \mathbb{Q}$, then $Po(K)$ is always non-trivial. Here, five distinct odd primes $p_{1}, p_{2}, p_{3}, q_{1}$ and $q_{2}$ already ramify in $K/ \mathbb{Q}$. Hence, for $K/ \mathbb{Q}$ to be a P\'{o}lya field, the rational prime $2$ should not ramify in that extension. This is the reason behind focusing on solely the case $p_{1}p_{2}p_{3} \equiv q_{1}q_{2} \equiv 1 \pmod 4$.

\end{remark}
\begin{remark}
It is a simple application of Chinese remainder theorem to assert the existence of infinitely many primes satisfying the congruence and Legendre symbol conditions mentioned in the statement of Theorem \ref{mainthm}.
\end{remark}

\section{preliminaries}

For a finite Galois extension $K/\mathbb{Q}$, the Galois group $G \coloneqq {\rm{Gal}}(K/ \mathbb{Q})$ induces a $G$-module structure on the multiplicative group of units $\mathcal{O}_{K}^{*}$ of $\mathcal{O}_{K}$ via the action $(\sigma, \alpha) \mapsto \sigma(\alpha)$. In \cite{zan}, Zatema used this to study the relation between the ramified primes in $K/ \mathbb{Q}$ and the cohomology group $H^{1}(G, {\mathcal{O}_{K}^{*}})$, in the context of the size of the P\'{o}lya group $Po(K)$. We record his result as follows. 
\begin{proposition}\label{prop-zan}\cite[Page 163]{zan}
    Let $K$ be a finite Galois extension of $\mathbb{Q}$ with Galois group $G = {\rm{Gal}}(K/ \mathbb{Q})$. Let $p_{1}, p_{2}, ... , p_{t}$ be the ramified primes in $K/\mathbb{Q}$ with ramification index of $p_{i}$ being $e_{i}$ for $i \in \{1,2,...,t\}$. Then there is an exact sequence 
    \begin{equation}\label{exact-equn}
0 \to H^{1}(G,\mathcal{O}_{K}^{*}) \to \displaystyle\bigoplus_{i = 1}^{t}\mathbb{Z}/e_{i}\mathbb{Z} \to Po(K) \to 0.
\end{equation}
of abelian groups.
\end{proposition}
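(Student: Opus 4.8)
The plan is to realize both $H^{1}(G,\mathcal{O}_{K}^{*})$ and $Po(K)$ as successive quotients of the group of ambiguous (i.e. $G$-invariant) ideals, and then to splice the two quotient maps together via an isomorphism theorem. Write $I_{K}$ for the group of nonzero fractional ideals of $K$, $P_{K}\subseteq I_{K}$ for the subgroup of principal fractional ideals, and $I_{K}^{G}$, $P_{K}^{G}$ for their subgroups of $G$-invariant elements. Since $Cl_{K}=I_{K}/P_{K}$, the P\'{o}lya group of Definition \ref{def0}, being generated by the classes of the Ostrowski ideals (which are ambiguous), is exactly the image of $I_{K}^{G}$ in $Cl_{K}$; by the second isomorphism theorem this yields
\begin{equation*}
Po(K)\;\cong\;I_{K}^{G}\big/\big(I_{K}^{G}\cap P_{K}\big)\;=\;I_{K}^{G}/P_{K}^{G},
\end{equation*}
since an ambiguous ideal that is principal is precisely an element of $P_{K}^{G}$.

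First I would describe $I_{K}^{G}$ explicitly. By the result of Brumer and Rosen recalled above (\cite{bru}), $I_{K}^{G}$ is the free abelian group on the Ostrowski ideals $\{\Pi_{p^{f}}(K)\}_{p}$, and for each rational prime $p$ one has $p\mathcal{O}_{K}=\Pi_{p^{f}}(K)^{e_{p}}$, where $e_{p}$ is the ramification index of $p$ (so $e_{p}=1$ for unramified $p$). Let $B$ denote the subgroup of $I_{K}^{G}$ consisting of ideals extended from $\mathbb{Q}$, that is, the image of $\mathbb{Q}^{*}$ under the map $a\mapsto a\mathcal{O}_{K}$. Then $B$ is freely generated by the $p\mathcal{O}_{K}=\Pi_{p^{f}}(K)^{e_{p}}$, whence
\begin{equation*}
I_{K}^{G}/B\;\cong\;\bigoplus_{p}\mathbb{Z}/e_{p}\mathbb{Z}\;=\;\bigoplus_{i=1}^{t}\mathbb{Z}/e_{i}\mathbb{Z},
\end{equation*}
the last equality holding because only the ramified primes $p_{1},\dots,p_{t}$ contribute nontrivial summands.

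Next I would bring in Galois cohomology through the short exact sequence of $G$-modules $1\to\mathcal{O}_{K}^{*}\to K^{*}\to P_{K}\to 1$, in which the second map sends $x$ to the principal ideal $(x)$. Taking $G$-invariants and using $K^{G}=\mathbb{Q}$ (so that $(K^{*})^{G}=\mathbb{Q}^{*}$) produces the exact sequence
\begin{equation*}
\mathbb{Q}^{*}\longrightarrow P_{K}^{G}\xrightarrow{\ \delta\ } H^{1}(G,\mathcal{O}_{K}^{*})\longrightarrow H^{1}(G,K^{*}).
\end{equation*}
Hilbert's Theorem $90$ gives $H^{1}(G,K^{*})=0$, so $\delta$ is surjective, while its kernel equals the image of $\mathbb{Q}^{*}\to P_{K}^{G}$, which is exactly $B$. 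Hence $\delta$ induces an isomorphism $P_{K}^{G}/B\xrightarrow{\sim}H^{1}(G,\mathcal{O}_{K}^{*})$. Finally I would assemble the pieces using the tower $B\subseteq P_{K}^{G}\subseteq I_{K}^{G}$: the third isomorphism theorem gives the short exact sequence
\begin{equation*}
0\to P_{K}^{G}/B\to I_{K}^{G}/B\to I_{K}^{G}/P_{K}^{G}\to 0,
\end{equation*}
and substituting the three identifications $P_{K}^{G}/B\cong H^{1}(G,\mathcal{O}_{K}^{*})$, $I_{K}^{G}/B\cong\bigoplus_{i=1}^{t}\mathbb{Z}/e_{i}\mathbb{Z}$, and $I_{K}^{G}/P_{K}^{G}\cong Po(K)$ yields precisely \eqref{exact-equn}.

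The only genuinely substantive inputs are (i) the Brumer--Rosen description of the ambiguous ideals as freely generated by the Ostrowski ideals, which underlies the clean computation of $I_{K}^{G}/B$, and (ii) verifying that the kernel of the connecting map $\delta$ is exactly $B$, i.e. that the image of $\mathbb{Q}^{*}$ in $P_{K}^{G}$ consists precisely of the extended ideals $a\mathcal{O}_{K}$ with $a\in\mathbb{Q}^{*}$. I expect point (ii), together with the careful bookkeeping needed so that the three quotients fit together to give the stated sequence rather than merely an isomorphic one, to be the main obstacle; everything else is formal diagram-chasing and standard Galois cohomology.
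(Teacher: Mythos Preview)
The paper does not prove Proposition~\ref{prop-zan}; it is quoted from \cite{zan} without argument. Your proof is correct and is essentially the standard derivation of Zantema's exact sequence: one computes $I_K^G/B\cong\bigoplus_i\mathbb{Z}/e_i\mathbb{Z}$ from the Brumer--Rosen description of ambiguous ideals, identifies $P_K^G/B$ with $H^1(G,\mathcal{O}_K^*)$ via the long exact sequence attached to $1\to\mathcal{O}_K^*\to K^*\to P_K\to 1$ and Hilbert~90, and then splices these into the tower $B\subseteq P_K^G\subseteq I_K^G$. The identifications $I_K^G\cap P_K=P_K^G$ and $\mathrm{im}(\mathbb{Q}^*\to P_K^G)=B$ that you flag in point~(ii) are indeed tautological once unpacked, so there is no real obstacle there; the only external input is the Brumer--Rosen result, exactly as you note.
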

For a totally real bi-quadratic field $K$, the following two results help us understand the group $H^{1}(G, \mathcal{O}_{K}^{*})$ better. This is done by first identifying $H^{1}(G, \mathcal{O}_{K}^{*})$ with its $2$-torsion subgroup $H^{1}(G, \mathcal{O}_{K}^{*})[2]$ under certain constraints. The second result describes a convenient method of computing $H^{1}(G, \mathcal{O}_{K}^{*})[2]$.
\begin{lemma}\label{lem-zan1}\cite[Theorem 4]{set}
    Let $K_{1}, K_{2}$ and $K_{3}$ be the quadratic subfields of a totally real bi-quadratic field $K$. Then the index of $H^{1}(G, \mathcal{O}_{K}^{*})[2]$ in $H^{1}(G, \mathcal{O}_{K}^{*})$ is at most $2$ and the index equals $2$ if and only if the rational prime $2$ is totally ramified in $K/ \mathbb{Q}$ and there exists $\alpha_{i} \in K_{i}$ for each $i = 1,2,3$ with $N(\alpha_{1}) = N(\alpha_{2}) = N(\alpha_{3}) = \pm 2. $
\end{lemma}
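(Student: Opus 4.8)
The plan is to read the structure of $H^1(G,\mathcal O_K^*)$ off Zantema's exact sequence~\eqref{exact-equn} and then to locate the at most one $\mathbb Z/4\mathbb Z$ summand it can contain. Since $G=\mathrm{Gal}(K/\mathbb Q)\cong(\mathbb Z/2\mathbb Z)^2$, the group $H^1(G,\mathcal O_K^*)$ is finite and annihilated by $|G|=4$, so $H^1(G,\mathcal O_K^*)\cong(\mathbb Z/4\mathbb Z)^{a}\oplus(\mathbb Z/2\mathbb Z)^{b}$ and the index of $H^1(G,\mathcal O_K^*)[2]$ in $H^1(G,\mathcal O_K^*)$ equals $2^{a}$. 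For an odd ramified prime the inertia subgroup is cyclic by tameness, hence of order $2$ in $(\mathbb Z/2\mathbb Z)^2$, so $e_p=2$; only $p=2$ can have $e_2=4$, and then $2$ is totally ramified. Hence $\bigoplus_i\mathbb Z/e_i\mathbb Z$ is elementary abelian of exponent $2$ unless $2$ is totally ramified, in which case it is $\mathbb Z/4\mathbb Z\oplus(\mathbb Z/2\mathbb Z)^{t-1}$. Applying $x\mapsto 2x$ to the injection $H^1(G,\mathcal O_K^*)\hookrightarrow\bigoplus_i\mathbb Z/e_i\mathbb Z$ of~\eqref{exact-equn} produces an injection $(\mathbb Z/2\mathbb Z)^{a}\cong 2\cdot H^1(G,\mathcal O_K^*)\hookrightarrow 2\cdot\bigl(\mathbb Z/4\mathbb Z\oplus(\mathbb Z/2\mathbb Z)^{t-1}\bigr)\cong\mathbb Z/2\mathbb Z$, so $a\le 1$ in general and $a=0$ whenever $2$ is not totally ramified. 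This already yields that the index is at most $2$, and equals $2$ only if $2$ is totally ramified.

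From now on assume $2$ totally ramified, and let $\mathfrak P$ be the prime of $K$ over $2$, so $\mathfrak P^{4}=(2)$ and $\mathfrak P^{2}=\mathfrak p_i\mathcal O_K$, where $\mathfrak p_i$ is the ramified prime of $K_i$ over $2$. Since $\mathfrak p_i$ is the unique ideal of $\mathcal O_{K_i}$ of norm $2$, it is principal if and only if some $\alpha_i\in K_i$ has $N_{K_i/\mathbb Q}(\alpha_i)=\pm2$. Hilbert's Theorem~90 (the computation underlying~\eqref{exact-equn}) identifies $H^1(G,\mathcal O_K^*)$ with the group of principal ambiguous ideals of $\mathcal O_K$ modulo ideals extended from $\mathbb Q$; since the $\mathbb Z/4\mathbb Z$-summand of $\bigoplus_i\mathbb Z/e_i\mathbb Z$ records $v_{\mathfrak P}$ modulo $4$, an element of order $4$ corresponds to a principal ambiguous ideal $(\beta)$ with $v_{\mathfrak P}((\beta))$ odd. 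By the Brumer--Rosen fact recalled in the introduction, $(\beta)=\mathfrak P^{v}\cdot\mathfrak c$ with $v$ odd and $\mathfrak c$ an ambiguous ideal prime to $2$, and $N_{K/K_i}$ carries every Ostrowski ideal over an odd prime $p$ to a power of $p\mathcal O_{K_i}$ (a local computation: in $K/K_i$ the prime $p$ is unramified or totally ramified), so $N_{K/K_i}(\mathfrak c)$ is principal. As $N_{K/K_i}(\mathfrak P)=\mathfrak p_i$, this forces $\mathfrak p_i^{\,v}$, hence $\mathfrak p_i$ (because $\mathfrak p_i^{2}=(2)$ and $v$ is odd), to be principal in $\mathcal O_{K_i}$ for every $i$; this settles the forward implication, with $\alpha_i$ a generator of $\mathfrak p_i$.

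The converse implication is where I expect the real work to lie. Given $\alpha_i\in K_i$ with $N_{K_i/\mathbb Q}(\alpha_i)=\pm2$ for $i=1,2,3$, one has $\mathfrak P^{2}=(\alpha_i)$ in $\mathcal O_K$, so $[\mathfrak P]$ has order dividing $2$ in $Cl_K$ and $\alpha_i/\alpha_j\in\mathcal O_K^{*}$; the point is to upgrade this to the assertion that $[\mathfrak P]=[\Pi_2(K)]$ lies in the subgroup of $Cl_K$ generated by the Ostrowski classes $[\Pi_p(K)]$ at odd ramified $p$ --- equivalently, that $\Pi_2(K)$ is redundant among the generators of $Po(K)$ --- which is exactly what puts an order-$4$ element into $\ker\bigl(\bigoplus_i\mathbb Z/e_i\mathbb Z\to Po(K)\bigr)=H^1(G,\mathcal O_K^*)$. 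I would attack this by restricting cohomology along the three quadratic subextensions $K/K_i$: inflation--restriction expresses $H^1(G,\mathcal O_K^*)$ through $H^1(\mathrm{Gal}(K_i/\mathbb Q),\mathcal O_{K_i}^{*})$ --- elementary abelian of order $2$ or $4$ by Hilbert's result on quadratic fields quoted in the introduction --- and through $H^1(\mathrm{Gal}(K/K_i),\mathcal O_K^{*})\cong\widehat H^{-1}(\mathbb Z/2\mathbb Z,\mathcal O_K^{*})$, and the genus-theoretic description of the latter ties the non-split extension responsible for a $\mathbb Z/4\mathbb Z$ to the principality of $\mathfrak p_i$ in $\mathcal O_{K_i}$; one then assembles the simultaneous generators $\alpha_1,\alpha_2,\alpha_3$ (via the units $\alpha_i/\alpha_j$ and Hilbert~90 for each $K/K_i$) into an explicit order-$4$ cocycle, or, dually, invokes Chevalley's ambiguous class number formula together with the Kuroda-type class number formula for $V_4$-extensions to force $[\Pi_2(K)]\in\langle[\Pi_p(K)]:p\text{ odd ramified}\rangle$. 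Throughout, the wild (total) ramification of $2$ is precisely what allows $\mathfrak P$, rather than merely $\mathfrak P^{2}$, to enter, which is the source of the extra factor $2$ in the index.
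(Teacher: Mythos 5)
The paper itself offers no proof of this lemma; it is quoted verbatim from Setzer's Theorem 4 (\cite{set}), so your argument has to stand on its own. Its first half does: annihilation by $|G|=4$ gives $H^{1}(G,\mathcal{O}_{K}^{*})\cong(\mathbb{Z}/4\mathbb{Z})^{a}\oplus(\mathbb{Z}/2\mathbb{Z})^{b}$, odd ramified primes are tame with cyclic inertia inside $(\mathbb{Z}/2\mathbb{Z})^{2}$ so $e_{p}=2$ while only a totally ramified $2$ contributes a $\mathbb{Z}/4\mathbb{Z}$ summand, and the multiplication-by-$2$ trick applied to the injection in \eqref{exact-equn} correctly yields index at most $2$, with index $2$ possible only when $2$ is totally ramified. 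The forward implication is also essentially sound: identifying $H^{1}$ with principal ambiguous ideals modulo ideals extended from $\mathbb{Q}$, an order-$4$ class is represented by $(\beta)$ with $v_{\mathfrak{P}}(\beta)$ odd, and your norm computation ($N_{K/K_{i}}(\Pi_{p}(K))=p\mathcal{O}_{K_{i}}$ for odd ramified $p$, $N_{K/K_{i}}(\mathfrak{P})=\mathfrak{p}_{i}$) does force $\mathfrak{p}_{i}$ to be principal, hence gives generators $\alpha_{i}$ of norm $\pm 2$.

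The genuine gap is the sufficiency direction: that total ramification of $2$ together with solvability of $N(\alpha_{i})=\pm 2$ in all three quadratic subfields actually produces an element of order $4$ in $H^{1}(G,\mathcal{O}_{K}^{*})$. You acknowledge this is where the real work lies, but what you offer is a menu of possible tools (inflation--restriction, genus theory, Chevalley's ambiguous class number formula, a Kuroda-type formula) rather than an argument: no cocycle is constructed, no index is computed, and it is never shown why the three norm conditions jointly force $[\Pi_{2}(K)]$ into the subgroup of $Cl_{K}$ generated by the odd Ostrowski classes. Moreover, the one concrete assertion you make in that direction, $\mathfrak{P}^{2}=(\alpha_{i})$ in $\mathcal{O}_{K}$, does not follow from $N_{K_{i}/\mathbb{Q}}(\alpha_{i})=\pm 2$ alone: for a non-integral $\alpha_{i}$ one only gets $(\alpha_{i})=\mathfrak{p}_{i}\mathfrak{a}$ with $\mathfrak{a}$ a fractional ideal of absolute norm $1$, so the hypothesis is equivalent to $[\mathfrak{p}_{i}]$ being a square in $Cl_{K_{i}}$, not to principality of $\mathfrak{p}_{i}$. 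This distinction is harmless in your forward direction (where you first derive principality and then take a generator), but it is exactly the pressure point in the converse, so as it stands half of the ``if and only if'' is unproved.
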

For a non-zero integer $t$, we denote its canonical image in $\mathbb{Q}^{*}/(\mathbb{Q}^{*})^{2}$ by $[t]$ and for non-zero integers $t_{1},\ldots,t_{r}$, we denote the subgroup of $\mathbb{Q}^{*}/(\mathbb{Q}^{*})^{2}$ generated by the images of $t_{1},\ldots,t_{r}$ by $\langle [t_{1}],\ldots,[t_{r}] \rangle$.
\begin{lemma}\label{lem-zan2}\cite[Lemma 4.3]{zan}
Let $K_{1}, K_{2}$ and $K_{3}$ be the quadratic subfields of a totally real bi-quadratic field $K$. For $i=1,2,3$, let $\Delta_{i}$ be the square-free part of the discriminant of $K_{i}$ and let $u_{i} = x_{i}+y_{i}\sqrt{\Delta_{i}}$ be a fundamental unit of $\mathcal{O}_{K_{i}}$ for $i=1,2,3$, with $x_{i} > 0$. Then $H^{1}(G, \mathcal{O}_{K}^{*})[2]$ is isomorphic to the subgroup $\langle [\Delta_{1}], [\Delta_{2}], [\Delta_{3}], [a_{1}], [a_{2}], [a_{3}] \rangle$ of $\mathbb{Q}^{*}/ (\mathbb{Q}^{*})^{2}$ where for $i=1,2,3$, $a_{i} = N(u_{i}+1)$ if $N(u_{i}) = 1$, and $1$ otherwise.
\end{lemma}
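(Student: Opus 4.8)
\emph{Overview and setup.} Write $G=\mathrm{Gal}(K/\mathbb{Q})=\{1,\sigma_{1},\sigma_{2},\sigma_{3}\}\cong(\mathbb{Z}/2\mathbb{Z})^{2}$, where $\sigma_{i}$ generates $\mathrm{Gal}(K/K_{i})$, so that $\sigma_{i}$ fixes $K_{i}$ pointwise and $\sigma_{3}=\sigma_{1}\sigma_{2}$. The plan is to realise $H^{1}(G,\mathcal{O}_{K}^{*})[2]$ explicitly as a subgroup of $\mathbb{Q}^{*}/(\mathbb{Q}^{*})^{2}$ via a Hilbert~$90$ descent, and then to identify that subgroup with the one in the statement by evaluating an explicit homomorphism on generators. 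Since $H^{1}(G,K^{*})=0$ by Hilbert's Theorem~$90$, every class is represented by a coboundary $\partial\theta\colon g\mapsto g(\theta)/\theta$ for some $\theta\in K^{*}$. I first claim that $[\partial\theta]\in H^{1}(G,\mathcal{O}_{K}^{*})[2]$ exactly when $\theta^{2}=r\alpha$ for some $r\in\mathbb{Q}^{*}$ and $\alpha\in\mathcal{O}_{K}^{*}$: indeed $2[\partial\theta]=0$ means $\partial(\theta^{2})$ is the coboundary of a unit $\alpha$, i.e.\ $\partial(\theta^{2}\alpha^{-1})=1$, which forces $\theta^{2}\alpha^{-1}\in(K^{*})^{G}=\mathbb{Q}^{*}$; the converse is immediate, and in that case $(g(\theta)/\theta)^{2}=g(\alpha)/\alpha$ is automatically a unit. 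Normalising $r>0$ by absorbing a sign into $\alpha$, I define
\[
\Psi\colon H^{1}(G,\mathcal{O}_{K}^{*})[2]\longrightarrow\mathbb{Q}^{*}/(\mathbb{Q}^{*})^{2},\qquad \Psi\big([\partial\theta]\big)=[r].
\]

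\emph{Well-definedness and injectivity.} I would check that two elements $\theta,\theta'$ yielding cohomologous unit-valued cocycles differ by $\theta=q\varepsilon\theta'$ with $q\in\mathbb{Q}^{*}$ and $\varepsilon\in\mathcal{O}_{K}^{*}$; using $(\mathcal{O}_{K}^{*})^{G}=\{\pm1\}$ and $\mathbb{Q}^{*}\cap\mathcal{O}_{K}^{*}=\{\pm1\}$, this gives $r/r'=\pm q^{2}$, hence $[r]=[r']$ after the $r>0$ normalisation, so $\Psi$ is well defined; it is visibly multiplicative. For injectivity, if $r=s^{2}$ with $s\in\mathbb{Q}^{*}$ then $\eta:=\theta/s$ satisfies $\eta^{2}=\alpha\in\mathcal{O}_{K}^{*}$, so $\eta$ is integral with integral inverse, whence $\eta\in\mathcal{O}_{K}^{*}$ and $\partial\theta=\partial\eta$ is the coboundary of a unit. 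Thus $\Psi$ is injective and $H^{1}(G,\mathcal{O}_{K}^{*})[2]\cong\mathrm{im}(\Psi)$.

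\emph{The generators lie in the image.} Taking $\theta=\sqrt{\Delta_{i}}\in K_{i}$ gives $\theta^{2}=\Delta_{i}$, so $\Psi([\partial\sqrt{\Delta_{i}}])=[\Delta_{i}]$. If $N(u_{i})=1$, then by Hilbert~$90$ for $K_{i}/\mathbb{Q}$ there is $\beta_{i}\in K_{i}^{*}$ with $u_{i}=\sigma(\beta_{i})/\beta_{i}$, $\sigma$ the nontrivial automorphism of $K_{i}/\mathbb{Q}$; then $\beta_{i}^{2}=N(\beta_{i})u_{i}^{-1}$, so $\theta=\beta_{i}$ falls under the criterion above with $r=N(\beta_{i})$ and $\alpha=u_{i}^{-1}$. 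Since $u_{i}+1=\mathrm{Tr}(\beta_{i})/\beta_{i}$ gives $N(u_{i}+1)=\mathrm{Tr}(\beta_{i})^{2}/N(\beta_{i})$, we get $[a_{i}]=[N(\beta_{i})]$, so $\Psi([\partial\beta_{i}])=[a_{i}]$; and when $N(u_{i})=-1$ no such $\beta_{i}$ exists and $a_{i}=1$, consistently. Hence $\langle[\Delta_{1}],[\Delta_{2}],[\Delta_{3}],[a_{1}],[a_{2}],[a_{3}]\rangle\subseteq\mathrm{im}(\Psi)$.

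\emph{The reverse inclusion is the main obstacle.} From $\theta^{2}=r\alpha$ one reads $r\equiv\alpha\pmod{(K^{*})^{2}}$, so $\mathrm{im}(\Psi)$ is precisely the set of positive rational square classes whose image in $K^{*}/(K^{*})^{2}$ is the class of a unit, namely $\big(\mathbb{Q}_{>0}^{*}\cap\mathcal{O}_{K}^{*}(K^{*})^{2}\big)/(\mathbb{Q}_{>0}^{*})^{2}$. Given such an $r\equiv\alpha$, the relation $g(r)=r$ forces $[\alpha]$ to lie in $\big(\mathcal{O}_{K}^{*}/(\mathcal{O}_{K}^{*})^{2}\big)^{G}$, and the hard part will be to convert this Galois-invariance into membership in the claimed subgroup. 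This is where the structure of the unit group enters: writing $\mathcal{O}_{K}^{*}$ in terms of $-1,u_{1},u_{2},u_{3}$ together with units generating the ($2$-power) index $[\mathcal{O}_{K}^{*}:\langle-1,u_{1},u_{2},u_{3}\rangle]$, I would show that among these only the classes $[u_{i}]$ with $N(u_{i})=1$ can be matched by a rational class, that such a match reproduces exactly $[a_{i}]$, and that the surplus units (together with $[-1]$, excluded by $r>0$) contribute nothing beyond the $[\Delta_{j}]$ coming from $\sqrt{\Delta_{j}}$. Equivalently, the crux is to prove that a $G$-invariant unit square class which equals a rational class already arises from the subgroup generated by $\sqrt{\Delta_{1}},\sqrt{\Delta_{2}},\sqrt{\Delta_{3}},\beta_{1},\beta_{2},\beta_{3}$; once this surplus-unit control is in place, the earlier steps yield the stated isomorphism.
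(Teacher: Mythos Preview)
The paper does not supply a proof of this lemma at all: it is quoted verbatim as \cite[Lemma~4.3]{zan} and used as a black box, so there is no in-paper argument to compare your proposal against.

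As for the proposal itself, the cohomological setup is sound. The identification of $H^{1}(G,\mathcal{O}_{K}^{*})[2]$ with classes $[\partial\theta]$ satisfying $\theta^{2}=r\alpha$ (for $r\in\mathbb{Q}^{*}$, $\alpha\in\mathcal{O}_{K}^{*}$) is correct, the map $\Psi$ is well defined and injective for the reasons you give, and your verification that each $[\Delta_{i}]$ and each $[a_{i}]$ lies in $\mathrm{im}(\Psi)$ is clean (the identity $N(u_{i}+1)=\mathrm{Tr}(\beta_{i})^{2}/N(\beta_{i})$ is a nice touch). So the inclusion $\langle[\Delta_{i}],[a_{i}]\rangle\subseteq\mathrm{im}(\Psi)$ is genuinely established.

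The gap is exactly where you flag it: the reverse inclusion. You correctly reduce it to the statement that every $G$-invariant class in $\mathcal{O}_{K}^{*}/(\mathcal{O}_{K}^{*})^{2}$ that becomes rational in $K^{*}/(K^{*})^{2}$ already comes from the $\sqrt{\Delta_{i}}$ and the $\beta_{i}$, but you do not prove this. The phrases ``I would show'' and ``once this surplus-unit control is in place'' mark precisely the missing argument. This step is not a formality: it requires controlling the index $[\mathcal{O}_{K}^{*}:\langle -1,u_{1},u_{2},u_{3}\rangle]$ and analysing how the extra units (in Kubota's sense) sit inside $K^{*}/(K^{*})^{2}$, checking in each of the possible unit-group types that no new rational square classes appear beyond those generated by the $[\Delta_{i}]$ and $[a_{i}]$. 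Zantema's original proof carries out exactly this case analysis; without it, your argument gives only one containment and does not yet determine $\mathrm{im}(\Psi)$.
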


\begin{remark}\label{rem1}
    From proposition \ref{prop-zan}, Lemma \ref{lem-zan1} and Lemma \ref{lem-zan2}, we have $Po(K) \cong \dfrac{\displaystyle\bigoplus_{i = 1}^{m}\mathbb{Z}/e_{i}\mathbb{Z}}{H^{1}(G,\mathcal{O}_{K}^{*})[2]}$ for a totally real bi-quadratic field $K$, where the rational prime $2$ is not totally ramified. Also, the presence of at least one prime that is congruent to $3$ modulo $4$ in the factorization of each $\Delta_{i}$ for $i=1,2,3$ ensures that $N(u_{i}) = 1$. We note that any other consideration will require a similar length of computations or less, which is evident from Lemma \ref{lem-zan2}.  
\end{remark}
For $K = \mathbb{Q}(\sqrt{q_{1}q_{2}}, \sqrt{p_{1}p_{2}p_{3}})$, it is evident that the numerator in the formula for $Po(K)$ is isomorphic to $(\mathbb{Z}/ 2\mathbb{Z})^{5}$. In the computation of $H^{1}(G,\mathcal{O}_{K}^{*})[2] = \langle{[\Delta_{1}], [\Delta_{2}], [\Delta_{3}], [a_{1}], [a_{2}], [a_{3}]\rangle}$, it is evident that $[\Delta_{1}] = [q_{1}q_{2}], [\Delta_{2}] = [p_{1}p_{2}p_{3}]$ and $[\Delta_{3}] = [q_{1}q_{2}p_{1}p_{2}p_{3}]$. In the following lemmas, we now focus on the computation of $[a_{1}]$ and $[a_{2}]$.
\begin{lemma}\label{lem0}
     Let $ K_{1} = \mathbb{Q}(\sqrt{q_{1} q_{2}})$ where $q_{1}$ and $q_{2}$ are prime numbers with $q_{1} \equiv q_{2} \equiv 3 \pmod 4$ such that $q_{1}q_{2} \equiv 1 \pmod 8$ and let $u_{1}$ denote a fundamental unit of $K_{1}$. Let $a_{1} = N(u_{1} + 1)$. Then $[a_{1}]$ is either $[q_{1}]$ or $[q_{2}]$.
\end{lemma}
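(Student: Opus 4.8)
The plan is to determine the class $[a_1]\in\mathbb{Q}^{*}/(\mathbb{Q}^{*})^{2}$ in two steps: first confine it to $\{[1],[q_1],[q_2],[q_1q_2]\}$, then eliminate $[1]$ and $[q_1q_2]$. Throughout, take $u_1>1$ (equivalently $x_1>0$ in the notation of Lemma \ref{lem-zan2}). As preliminaries, note that since $q_1\equiv 3\pmod 4$ there is no unit of norm $-1$ in $\mathcal{O}_{K_1}$ — a solution of $x^{2}-q_1q_2y^{2}=-1$, or of $x^{2}-q_1q_2y^{2}=-4$ with $x,y$ odd, would make $-1$ a quadratic residue modulo $q_1$ — so $N(u_1)=1$ (as also recorded in Remark \ref{rem1}); moreover $u_1>1$ gives $u_1+1>0$ and $\overline{u_1+1}=u_1^{-1}+1>0$, hence $a_1=N(u_1+1)>0$. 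The computational core is the identity
\[
(u_1+1)^{2}=u_1^{2}+2u_1+1=u_1\bigl(u_1+u_1^{-1}+2\bigr)=a_1u_1,
\]
obtained from $u_1^{-1}=\overline{u_1}$ and $u_1+u_1^{-1}=\mathrm{Tr}(u_1)=N(u_1+1)-2=a_1-2$. Thus $a_1u_1$ is a square in $K_1^{*}$; equivalently $[a_1]=[u_1]$ in $K_1^{*}/(K_1^{*})^{2}$.

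For the first step I would exploit the Galois structure of the ideal $(u_1+1)$ of $\mathcal{O}_{K_1}$. From $u_1\overline{u_1}=1$ we get $\overline{u_1+1}=u_1^{-1}(u_1+1)$, so $(u_1+1)$ is fixed by $\mathrm{Gal}(K_1/\mathbb{Q})$. Since $q_1q_2\equiv 1\pmod 4$, the rational prime $2$ is unramified in $K_1/\mathbb{Q}$, and the only ramified primes are $\mathfrak{q}_1,\mathfrak{q}_2$ lying above $q_1,q_2$, with $\mathfrak{q}_i^{2}=(q_i)$. Hence every Galois-stable integral ideal of $\mathcal{O}_{K_1}$ has the form $(c)\,\mathfrak{q}_1^{s_1}\mathfrak{q}_2^{s_2}$ with $c\in\mathbb{Z}_{>0}$ and $s_1,s_2\in\{0,1\}$. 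Applying this to $(u_1+1)$ and taking norms yields $a_1=c^{2}q_1^{s_1}q_2^{s_2}$, so $[a_1]\in\{[1],[q_1],[q_2],[q_1q_2]\}$.

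For the second step, suppose for contradiction that $[a_1]$ equals $[1]$ or $[q_1q_2]$. Since $q_1q_2=(\sqrt{q_1q_2})^{2}$ is a square in $K_1$, the image of $[a_1]$ in $K_1^{*}/(K_1^{*})^{2}$ is then trivial, and by the core identity this forces $[u_1]=1$ in $K_1^{*}/(K_1^{*})^{2}$, say $u_1=v^{2}$ with $v\in K_1^{*}$. Comparing principal ideals, $(v)^{2}=(u_1)=\mathcal{O}_{K_1}$, so $v\in\mathcal{O}_{K_1}^{*}$; then the fundamental unit $u_1$ would be the square of a unit, and writing $v=\pm u_1^{k}$ gives $u_1^{2k-1}=1$, which is impossible. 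Therefore $[a_1]=[q_1]$ or $[q_2]$, as claimed.

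I expect the only delicate point is the middle step — the description of Galois-stable ideals and the verification that $2$ plays no role because $q_1q_2\equiv 1\pmod 4$; the rest is short algebra with $u_1$. Alternatively one can argue by bare hands: writing $u_1=\tfrac{1}{2}(a+b\sqrt{q_1q_2})$ with $a^{2}-q_1q_2b^{2}=4$ gives $a_1=a+2$ and $(a-2)(a+2)=q_1q_2b^{2}$; coprimality of $a-2$ and $a+2$ up to a power of $2$ again confines the square-free part of $a+2$ to $\{1,q_1,q_2,q_1q_2\}$, while the two extreme cases respectively produce a unit of norm $-1$ and a unit strictly between $1$ and $u_1$, each absurd.
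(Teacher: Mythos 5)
Your proof is correct, but it takes a genuinely different route from the paper's. The paper argues in coordinates: it first uses $q_{1}q_{2}\equiv 1\pmod 8$ to rule out a half-integral fundamental unit, writes $u_{1}=x+y\sqrt{q_{1}q_{2}}$ with $N(u_{1})=1$, so that $a_{1}=2(x+1)$ and $\frac{x+1}{2}\cdot\frac{x-1}{2}=q_{1}q_{2}\big(\frac{y}{2}\big)^{2}$ with coprime factors, and then excludes $\frac{x+1}{2}=m^{2}$ (which would make $m+n\sqrt{q_{1}q_{2}}$ a unit whose square is $u_{1}$, contradicting fundamentality) and $\frac{x+1}{2}=q_{1}q_{2}m^{2}$ (which would force $\big(\frac{-1}{q_{1}}\big)=1$). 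Your argument is instead coordinate-free: the identity $(u_{1}+1)^{2}=a_{1}u_{1}$ (valid because $N(u_{1})=1$, which is exactly where $q_{1}\equiv 3\pmod 4$ enters), the classification of Galois-stable ideals applied to $(u_{1}+1)$ to confine $[a_{1}]$ to $\{[1],[q_{1}],[q_{2}],[q_{1}q_{2}]\}$ (with $q_{1}q_{2}\equiv 1\pmod 4$ ensuring $2$ is unramified), and the observation that $[a_{1}]\in\{[1],[q_{1}q_{2}]\}$ would make $u_{1}$ the square of a unit, so both extreme cases die by one fundamentality argument. What your route buys: no need for the explicit shape of $u_{1}$ or the full strength of the mod $8$ hypothesis, and a uniform treatment of the two excluded cases; what the paper's computational setup buys is the explicit equation $t_{1}m^{2}-t_{2}n^{2}=1$ and a template that is reused verbatim in Lemma \ref{lem1} and in the Legendre-symbol case analysis proving Theorem \ref{mainthm}, where the finer splitting among divisors of the radicand is needed. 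Your closing ``bare hands'' sketch is essentially the paper's proof; note only that excluding a factor of $2$ from the square-free part of $a+2$ requires the small parity/mod $8$ argument that the paper supplies.
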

\begin{proof}
We first note that if $u_{1} = \frac{x + y\sqrt{q_{1}q_{2}}}{2}$ for some odd integers $x$ and $y$, then $N(u_{1}) = \pm {1}$ implies that $x^{2} - y^{2}q_{1}q_{2} = \pm {4}$. Since $q_{1}q_{2} \equiv 1 \pmod {8}$, reading this equation modulo $8$ yields $\pm {4} \equiv x^{2} - y^{2}q_{1}q_{2} \equiv 0 \pmod {8}$, a contradiction. Thus we may assume that $u_{1} = x + y \sqrt{q_{1}q_{2}}$ for some $x,y \in \mathbb{Z}$.  Since $q_{1} \equiv q_{2} \equiv 3 \pmod 4$, we have $N(u_{1}) = 1$, which also implies that $N(u_{1}+1) = 2(x+1)$.

\smallskip

Moreover, $N(u_{1})= 1$ also implies that $x$ is odd and $y$ is even. Therefore, $\frac{x+1}{2} \cdot \frac{x-1}{2} = q_{1}q_{2}(\frac{y}{2})^{2}$ and since $\gcd(\frac{x+1}{2}, \frac{x-1}{2}) = 1$, we conclude that $\frac{x+1}{2} \in \{m^{2}, q_{1}m^{2}, q_{2}m^{2}, q_{1}q_{2}m^{2}\}$ where $m$ is an arbitrary factor of $\frac{y}{2}$. Denoting $n = \frac{y/2}{m}$, we now prove that $\frac{x+1}{2} \not\in \{m^{2}, q_{1}q_{2}m^{2}\}$.

\smallskip

If $\frac{x+1}{2} = m^{2}$, then $1 = \frac{x+1}{2} - \frac{x-1}{2}$ implies that $1 = m^{2} - q_{1}q_{2}n^{2}$. Therefore, $m+n\sqrt{q_{1}q_{2}} \in \mathcal{O}_{K_{1}}^{*}$, a contradiction to the fact that $u_{1}$ is a fundamental unit of $K_{1}$. Again, if $\frac{x+1}{2} = q_{1}q_{2}m^{2}$, then $1 = q_{1}q_{2}m^{2} - n^{2}$ implies that $\Big(\frac{-1}{q_{1}}\Big) = 1$, a contradiction as $q_{1} \equiv 3 \pmod 4$. Consequently, $\frac{x+1}{2} \in \{q_{1}m^{2}, q_{2}m^{2}\}$. The result now follows at once from the observation that $$[a_{1}] = [N(u_{1} + 1)] = [2(x + 1)] = \big[4 \cdot \frac{x + 1}{2}\big] = \big[\frac{x + 1}{2}\big] = [q_{1}] \mbox{ or } [q_{2}].$$
\end{proof}

\begin{lemma}\label{lem1}
Let $ K_{2} = \mathbb{Q}(\sqrt{p_{1} p_{2} p_{3}})$ where $p_{1}, p_{2}$ and $p_{3}$ are prime numbers with $p_{1} \equiv p_{2} \equiv 3 \pmod 4$, $p_{3} \equiv 1 \pmod 4$ and such that $p_{1}p_{2}p_{3} \equiv 1 \pmod 8$. Let $u_{2} = x_{2} + y_{2} \sqrt{p_{1}p_{2}p_{3}}$ be a fundamental unit of $K_{2}$ and let $a_{2} = N(u_{2} + 1)$. Then the square-free representative of $[a_{2}]$ is a proper, non-trivial divisor of $p_{1}p_{2}p_{3}$. More precisely, we have the following.
\begin{enumerate}
\item If $\big(\frac{p_{1}}{p_{2}}\big) = \big(\frac{p_{1}}{p_{3}}\big) = 1$, then $[a_{2}] = [p_{1}], [p_{3}]$ or $[p_{1}p_{3}]$. Moreover, if $\Big(\frac{p_{2}}{p_{3}}\Big) = -1$, then $[a_{2}] = [p_{1}]$. \\
\item If $\big(\frac{p_{1}}{p_{2}}\big) = \big(\frac{p_{1}p_{2}}{p_{3}}\big) = -1$, then $[a_{2}] = [p_{2}]$. \\
\item If $\big(\frac{p_{1}}{p_{3}}\big) = \big(\frac{p_{2}}{p_{3}}\big) = 1$, then $[a_{2}] = [p_{3}]$. \\
\item If $\big(\frac{p_{1}}{p_{3}}\big) = \big(\frac{p_{2}}{p_{3}}\big) =  -1$, then $[a_{2}] = [p_{1}p_{2}]$. \\
\item If $\big(\frac{p_{1}}{p_{2}}\big) = \big(\frac{p_{2}}{p_{3}}\big) = 1$, then $[a_{2}] = [p_{1}p_{3}]$ . \\
\item If $\big(\frac{p_{1}}{p_{2}}\big) = - \big(\frac{p_{1}}{p_{3}}\big) = -1$, then $[a_{2}] = [p_{2}], [p_{3}]$ or $[p_{2}p_{3}]$. Moreover, if $\Big(\frac{p_{2}}{p_{3}}\Big) = -1$, then $[a_{2}] = [p_{2}p_{3}]$.
\end{enumerate}
\end{lemma}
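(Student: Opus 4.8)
The plan is to follow the template of the proof of Lemma~\ref{lem0}. First I would fix the shape of $u_{2}$: if $u_{2}=\tfrac{x+y\sqrt{p_{1}p_{2}p_{3}}}{2}$ with $x,y$ odd, then $x^{2}-y^{2}p_{1}p_{2}p_{3}=\pm 4$, and reading this modulo $8$ contradicts $p_{1}p_{2}p_{3}\equiv 1\pmod 8$; hence $u_{2}=x_{2}+y_{2}\sqrt{p_{1}p_{2}p_{3}}$ with $x_{2},y_{2}\in\mathbb{Z}$. Since $p_{1}\equiv 3\pmod 4$, the equation $x_{2}^{2}-y_{2}^{2}p_{1}p_{2}p_{3}=-1$ is impossible modulo $p_{1}$, so $N(u_{2})=1$; this yields $a_{2}=N(u_{2}+1)=2(x_{2}+1)$, whence $[a_{2}]=\big[\tfrac{x_{2}+1}{2}\big]$. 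A short parity argument (again using $p_{1}p_{2}p_{3}\equiv 1\pmod 8$) forces $x_{2}$ odd and $y_{2}$ even, so $\tfrac{x_{2}+1}{2}$ and $\tfrac{x_{2}-1}{2}$ are coprime positive integers.

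Writing $y_{2}=2w$, we get $\tfrac{x_{2}+1}{2}\cdot\tfrac{x_{2}-1}{2}=w^{2}p_{1}p_{2}p_{3}$. Writing $\tfrac{x_{2}+1}{2}=d_{1}m_{1}^{2}$ and $\tfrac{x_{2}-1}{2}=d_{2}m_{2}^{2}$ with $d_{1},d_{2}$ squarefree, and comparing squarefree parts of the two sides, gives $d_{1}d_{2}=p_{1}p_{2}p_{3}$, $m_{1}m_{2}=w$, and $\gcd(d_{1}m_{1},d_{2}m_{2})=1$; in particular $[a_{2}]=[d_{1}]$ and $d_{1}$ is a squarefree divisor of $p_{1}p_{2}p_{3}$. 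Exactly as in Lemma~\ref{lem0}, $d_{1}=1$ is impossible, because then $(m_{1}+m_{2}\sqrt{p_{1}p_{2}p_{3}})^{2}=u_{2}$ would contradict that $u_{2}$ is fundamental, and $d_{1}=p_{1}p_{2}p_{3}$ is impossible, because $d_{1}m_{1}^{2}-d_{2}m_{2}^{2}=1$ would force $-1$ to be a square modulo $p_{1}$. Hence $d_{1}\in\{p_{1},p_{2},p_{3},p_{1}p_{2},p_{1}p_{3},p_{2}p_{3}\}$ --- which is the first assertion --- and $\{d_{1},d_{2}\}$ is one of $\{p_{1},p_{2}p_{3}\}$, $\{p_{2},p_{1}p_{3}\}$, $\{p_{3},p_{1}p_{2}\}$.

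It remains to match the six possibilities for $d_{1}$ against items (1)--(6). For this I would, in each case, substitute into $d_{1}m_{1}^{2}-d_{2}m_{2}^{2}=1$ and reduce modulo each of $p_{1},p_{2},p_{3}$ in turn; since $p\mid d_{2}$ implies $p\nmid d_{1}m_{1}$ and $p\mid d_{1}$ implies $p\nmid d_{2}m_{2}$, each reduction becomes a Legendre-symbol identity. Using $\big(\tfrac{-1}{p_{1}}\big)=\big(\tfrac{-1}{p_{2}}\big)=-1$, $\big(\tfrac{-1}{p_{3}}\big)=1$, and quadratic reciprocity (so that $\big(\tfrac{p_{3}}{p_{i}}\big)=\big(\tfrac{p_{i}}{p_{3}}\big)$ for $i=1,2$ while $\big(\tfrac{p_{1}}{p_{2}}\big)=-\big(\tfrac{p_{2}}{p_{1}}\big)$), each candidate value of $d_{1}$ acquires a set of necessary sign conditions on the symbols $\big(\tfrac{p_{i}}{p_{j}}\big)$. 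Reading off, for each hypothesis in (1)--(6), which candidates survive then produces the stated value(s) of $[a_{2}]$, and the ``Moreover'' clauses are exactly the situations where one extra symbol condition kills all but one candidate. I expect the main obstacle to be precisely this final bookkeeping: for several sign patterns, two candidate values of $d_{1}$ --- one a single prime $p_{i}$, one a product $p_{j}p_{k}$ --- remain a priori compatible, and the delicate point is to check, congruence by congruence, that the hypotheses of the corresponding item genuinely exclude the unwanted one; this is where I would recheck every reduction, since an overlooked compatible case would weaken the claimed equality into a union of two ideal classes.
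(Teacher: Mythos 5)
Your plan is, step for step, the paper's own argument: the mod $8$ exclusion of half-integral units, the deduction $N(u_{2})=1$ and $[a_{2}]=\big[\tfrac{x_{2}+1}{2}\big]$, the coprime factorization $\tfrac{x_{2}+1}{2}\cdot\tfrac{x_{2}-1}{2}=p_{1}p_{2}p_{3}\big(\tfrac{y_{2}}{2}\big)^{2}$, and the removal of the divisors $1$ and $p_{1}p_{2}p_{3}$ via the fundamental-unit and $\big(\frac{-1}{p_{1}}\big)=-1$ arguments all match; the paper likewise finishes by reducing $t_{1}m^{2}-t_{2}n^{2}=1$ modulo $p_{1},p_{2},p_{3}$ and using reciprocity, writing this out only for item (1) and declaring the remaining cases similar. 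So the method is right and identical in spirit.

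The gap is that the step you defer --- "reading off which candidates survive" --- is the entire content of items (1)--(6), and the danger you flag at the end is real, not hypothetical. Carrying out the reductions, the necessary conditions are: $p_{1}$ needs $\big(\frac{p_{1}}{p_{2}}\big)=\big(\frac{p_{1}}{p_{3}}\big)=1$, $\big(\frac{p_{2}p_{3}}{p_{1}}\big)=-1$; $p_{2}$ needs $\big(\frac{p_{2}}{p_{1}}\big)=\big(\frac{p_{2}}{p_{3}}\big)=1$, $\big(\frac{p_{1}p_{3}}{p_{2}}\big)=-1$; $p_{3}$ needs $\big(\frac{p_{3}}{p_{1}}\big)=\big(\frac{p_{3}}{p_{2}}\big)=\big(\frac{p_{1}p_{2}}{p_{3}}\big)=1$; $p_{1}p_{2}$ needs $\big(\frac{p_{1}p_{2}}{p_{3}}\big)=1$, $\big(\frac{p_{3}}{p_{1}}\big)=\big(\frac{p_{3}}{p_{2}}\big)=-1$; $p_{1}p_{3}$ needs $\big(\frac{p_{1}p_{3}}{p_{2}}\big)=1$, $\big(\frac{p_{2}}{p_{1}}\big)=-1$, $\big(\frac{p_{2}}{p_{3}}\big)=1$; $p_{2}p_{3}$ needs $\big(\frac{p_{2}p_{3}}{p_{1}}\big)=1$, $\big(\frac{p_{1}}{p_{2}}\big)=-1$, $\big(\frac{p_{1}}{p_{3}}\big)=1$. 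Running the eight complete sign patterns of $\big(\frac{p_{1}}{p_{2}}\big),\big(\frac{p_{1}}{p_{3}}\big),\big(\frac{p_{2}}{p_{3}}\big)$ through this table reproduces exactly the paper's flow-chart (for instance, item (1) together with $\big(\frac{p_{2}}{p_{3}}\big)=-1$ leaves only $p_{1}$, the one case the paper verifies). But the two-symbol hypotheses of items (2), (3) and (5), read literally, do \emph{not} isolate a single survivor: e.g.\ the pattern $\big(\frac{p_{1}}{p_{2}}\big)=-1$, $\big(\frac{p_{1}}{p_{3}}\big)=1$, $\big(\frac{p_{2}}{p_{3}}\big)=-1$ satisfies the hypothesis of (2), yet the table eliminates $p_{2}$ (it requires $\big(\frac{p_{2}}{p_{3}}\big)=1$) and leaves only $p_{2}p_{3}$, in agreement with item (6) and the flow-chart but not with the claim $[a_{2}]=[p_{2}]$; similarly, under $\big(\frac{p_{1}}{p_{2}}\big)=\big(\frac{p_{1}}{p_{3}}\big)=\big(\frac{p_{2}}{p_{3}}\big)=1$ three candidates survive, so (3) and (5) as worded cannot be obtained by elimination alone. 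So to complete your proof you must actually execute the eight-pattern check, and in doing so read each item as the full sign pattern shown in the flow-chart rather than as its literal two-symbol hypothesis; as written, your proposal stops exactly where the lemma's substance (and this wrinkle) begins.
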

The proofs of different parts of Lemma \ref{lem1} are quite similar to each other. Therefore, for the sake of brevity, we furnish a detailed argument only for the first part. We first provide a pictorial description of the lemma to facilitate understanding of all the cases.

\begin{center}
\begin{tikzpicture}[node distance=2cm]
\node (start) [startstop] {$p_{1}, p_{2}, p_{3}$};
\node (pro2a) [process, below of=start, xshift=-1.5cm] {$\Big(\frac{p_{1}}{p_{2}}\Big) = 1$};
\node (pro2b) [process, below of=start, xshift=1.5cm] {$\Big(\frac{p_{1}}{p_{2}}\Big) = -1$};
\node (pro3a) [process, below of=pro2a, xshift=-4.5cm] {$\Big(\frac{p_{1}}{p_{3}}\Big) = 1$};
\node (pro3b) [process, below of=pro2a, xshift=0cm] {$\Big(\frac{p_{1}}{p_{3}}\Big) = -1$};
\node (pro3c) [process, below of=pro2b, xshift=0cm] {$\Big(\frac{p_{1}}{p_{3}}\Big) = 1$};
\node (pro3d) [process, below of=pro2b,xshift=4.5cm] {$\Big(\frac{p_{1}}{p_{3}}\Big) = -1$};
\node (pro4a) [process, below of=pro3a, xshift=-2cm] {$\Big(\frac{p_{2}}{p_{3}}\Big) = 1$};
\node (pro4b) [process, below of=pro3a, xshift=0cm] {$\Big(\frac{p_{2}}{p_{3}}\Big) = -1$};
\node (pro4c) [process, below of=pro3b, xshift=-2cm] {$\Big(\frac{p_{2}}{p_{3}}\Big) = 1$};
\node (pro4d) [process, below of=pro3b, xshift=0cm] {$\Big(\frac{p_{2}}{p_{3}}\Big) = -1$};
\node (pro4e) [process, below of=pro3c, xshift=-0cm] {$\Big(\frac{p_{2}}{p_{3}}\Big) = 1$};
\node (pro4f) [process, below of=pro3c, xshift=2cm] {$\Big(\frac{p_{2}}{p_{3}}\Big) = -1$};
\node (pro4g) [process, below of=pro3d, xshift=0cm] {$\Big(\frac{p_{2}}{p_{3}}\Big) = 1$};
\node (pro4h) [process, below of=pro3d, xshift=2cm] {$\Big(\frac{p_{2}}{p_{3}}\Big) = -1$};
\node (dec1) [decision, below of=pro4a, xshift=0cm] {$a_{2} = p_{1}, p_{3}, p_{1}p_{3}$};
\node (dec2) [decision, below of=pro4b, xshift=0.5cm] {$a_{2} = p_{1}$};
\node (dec3) [decision, below of=pro4c, xshift=0cm] {$a_{2} = p_{1}p_{3}$};
\node (dec4) [decision, below of=pro4d, xshift=0cm] {$a_{2} = p_{1}p_{2}$};
\node (dec5) [decision, below of=pro4e, xshift=0cm] {$a_{2} = p_{2}, p_{3}, p_{2}p_{3}$};
\node (dec6) [decision, below of=pro4f, xshift=0.5cm] {$a_{2} = p_{2}p_{3}$};
\node (dec7) [decision, below of=pro4g, xshift=0cm] {$a_{2} = p_{2}$};
\node (dec8) [decision, below of=pro4h, xshift=0cm] {$a_{2} = p_{1}p_{2}$};
\draw [arrow] (start) -- (pro2a);
\draw [arrow] (start) -- (pro2b);
\draw [arrow] (pro2a) -| (pro3a);
\draw [arrow] (pro2a) -- (pro3b);
\draw [arrow] (pro2b) -- (pro3c);
\draw [arrow] (pro2b) -| (pro3d);
\draw [arrow] (pro3a) -| (pro4a);
\draw [arrow] (pro3a) -- (pro4b);
\draw [arrow] (pro3b) -| (pro4c);
\draw [arrow] (pro3b) -- (pro4d);
\draw [arrow] (pro3c) -- (pro4e);
\draw [arrow] (pro3c) -| (pro4f);
\draw [arrow] (pro3d) -- (pro4g);
\draw [arrow] (pro3d) -| (pro4h);
\draw [arrow] (pro4a) -- (dec1);
\draw [arrow] (pro4b) -- (dec2);
\draw [arrow] (pro4c) -- (dec3);
\draw [arrow] (pro4d) -- (dec4);
\draw [arrow] (pro4e) -- (dec5);
\draw [arrow] (pro4f) -- (dec6);
\draw [arrow] (pro4g) -- (dec7);
\draw [arrow] (pro4h) -- (dec8);
\end{tikzpicture}    
\end{center}

\noindent Proof of Lemma \ref{lem1}:- Let $K_{2} = \mathbb{Q}(\sqrt{p_{1}p_{2}p_{3}})$ where $p_{1}, p_{2}$ and $p_{3}$ are prime numbers with $p_{1} \equiv p_{2} \equiv 3 \pmod 4$, $p_{3} \equiv 1 \pmod 4$ and such that $p_{1}p_{2}p_{3} \equiv 1 \pmod 8$ and let $u_{2} = x_{2} + y_{2}\sqrt{p_{1}p_{2}p_{3}}$ be a fundamental unit of $K_{2}$. Following a similar argument used in the proof of Lemma \ref{lem0}, we observe that $N(u_{2}+1) = 2(x_{2}+1)$ with $x_{2}+1$ being even. A similar argument then shows that $(\frac{x_{2}+1}{2}) \cdot (\frac{x_{2}-1}{2}) = p_{1}p_{2}p_{3}(\frac{y_{2}}{2})^{2}$ with $\gcd(\frac{x_{2}+1}{2}, \frac{x_{2}-1}{2}) = 1$. Therefore, $$\frac{x_{2}+1}{2} \in \{m^{2}, p_{1}m^{2}, p_{2}m^{2}, p_{3}m^{2}, p_{1}p_{2}m^{2}, p_{2}p_{3}m^{2}, p_{3}p_{1}m^{2}, p_{1}p_{2}p_{3}m^{2}\},$$ where $m$ and $n$ are positive integers with $mn = \frac{y_{2}}{2}$.

\smallskip

Now, we claim that $\frac{x_{2}+1}{2} \not \in \{m^{2}, p_{1}p_{2}p_{3}m^{2}\}$. If $\frac{x_{2}+1}{2} = m^{2}$, then we have $1 = (\frac{x_{2}+1}{2}) - (\frac{x_{2}-1}{2}) = m^{2} - p_{1}p_{2}p_{3}n^{2}$ which implies $m+n\sqrt{p_{1}p_{2}p_{3}} \in \mathcal{O}_{K}^{*}$, a contradiction to the fact that $x_{2} + y_{2}\sqrt{p_{1}p_{2}p_{3}}$ is the fundamental unit of $K{2}$. Similarly, if $\frac{x_{2}+1}{2} = m^{2}p_{1}p_{2}p_{3}$, then we obtain $1 = (\frac{x_{2}+1}{2}) - (\frac{x_{2}-1}{2}) = m^{2}p_{1}p_{2}p_{3} - n^{2}$ which implies that $\Big(\frac{-1}{p_{1}}\Big) = 1$, a contradiction again as $p_{1} \equiv 3 \pmod 4$.

\smallskip

Next, we assume the hypotheses of $(1)$. That is, $\Big(\frac{p_{1}}{p_{2}}\Big) = \Big(\frac{p_{1}}{p_{3}}\Big) = 1$. Since $p_{1} \equiv p_{2} \equiv 3 \pmod {4}$ and $p_{3} \equiv 1 \pmod {4}$, it follows from the quadratic reciprocity law that $\Big(\frac{p_{2}}{p_{1}}\Big) = -1$ and $\Big(\frac{p_{3}}{p_{1}}\Big) = 1$. We claim that $\frac{x_{2}+1}{2} \not \in \{p_{2}m^{2}, p_{1}p_{2}m^{2}, p_{2}p_{3}m^{2}\}$. If not, then we first assume that $\frac{x_{2}+1}{2} = p_{2}m^{2}$. Then $1 = \frac{x_{2}+1}{2} - \frac{x_{2}-1}{2} = p_{2}m^{2} - p_{1}p_{3}n^{2}$ implies that $\Big(\frac{p_{2}}{p_{1}}\Big) = 1$, a contradiction. Similarly, $\frac{x_{2}+1}{2} = p_{1}p_{2}m^{2}$ implies that $p_{1}p_{2}m^{2} - p_{3}n^{2} = 1$. It follows that $\Big(\frac{p_{3}}{p_{1}}\Big) = -1$, a contradiction. Again, $\frac{x_{2}+1}{2} = p_{2}p_{3}m^{2}$ implies $p_{2}p_{3}m^{2} - p_{1}n^{2} = 1$. Hence $\Big(\frac{p_{1}}{p_{2}}\Big) = -1$, a contradiction. Therefore, we conclude that $\frac{x_{2}+1}{2} \in \{p_{1}m^{2}, p_{3}m^{2}, p_{1}p_{3}m^{2}\}$. In addition, if $\Big(\frac{p_{2}}{p_{3}}\Big) = -1$, a similar argument as above forces $\frac{x_{2}+1}{2} \not \in \{p_{3}m^{2}, p_{1}p_{3}m^{2}\}$. Noting that $[a_{2}] = [2(x_{2}+1)] = [4 \cdot \frac{x_{2}+1}{2}] = [\frac{x_{2}+1}{2}]$, the proof of $(1)$ is complete. The rest of the cases follow a similar line of argument. $\hfill\Box$

\section{Proof of Theorem \ref{mainthm}}

The computation of $Po(K)$ for $K = \mathbb{Q}(\sqrt{q_{1}q_{2}}, \sqrt{p_{1}p_{2}p_{3}})$ requires us to understand the group $H^{1}(G, \mathcal{O}_{K}^{*})$, where $G \coloneqq {\rm{Gal}}(K/\mathbb{Q})$. Lemma \ref{lem0} implies that $$H^{1}(G, \mathcal{O}_{K}^{*}) \simeq \langle{[\Delta_{1}], [\Delta_{2}], [\Delta_{3}], [a_{1}], [a_{2}], [a_{3}]\rangle} \simeq \langle{[q_{1}], [q_{2}], [p_{1}p_{2}p_{3}], [a_{2}], [a_{3}]\rangle}.$$ From Remark \ref{rem1}, we know that $K$ is a P\'{o}lya field if and only if  $H^{1}(G, \mathcal{O}_{K}^{*})$ is generated by exactly $5$ elements, and no less. Lemma \ref{lem1} determines all possible cases for $a_{2}$. For each of those cases, we now choose suitable cases for $a_{3}$ such that $K$ is a P\'{o}lya field. 

\smallskip

Let $u_{3}=x_{3}+y_{3}\sqrt{p_{1}p_{2}p_{3}q_{1}q_{2}}$ be a fundamental unit of $K_{3} \coloneqq \mathbb{Q}(\sqrt{p_{1}p_{2}p_{3}q_{1}q_{2}})$. A calculation similar to the one used in the proofs of Lemma \ref{lem0} and Lemma \ref{lem1} ensures that $$N(u_{3}+1) = 2(x_{3}+1), \text{ } x_{3}+1 \text{ is even, and } \Big(\frac{x_{3}+1}{2}\Big) \Big(\frac{x_{3}-1}{2}\Big) = p_{1}p_{2}p_{3}q_{1}q_{2} \Big(\frac{y}{2}\Big)^{2}.$$ 
As $\gcd(\frac{x_{3}+1}{2}, \frac{x_{3}-1}{2}) = 1$, we have $\frac{x_{3}+1}{2} = t_{1}m^{2}, \frac{x_{3}-1}{2} = t_{2}n^{2}$ such that $t_{1}t_{2} = p_{1}p_{2}p_{3}q_{1}q_{2}$ and $mn = \frac{y}{2}$.
This, in turn, implies
\begin{equation}\label{eq1}
 t_{1}m^{2} - t_{2}n^{2} = 1.    
\end{equation}

\smallskip

Now, we prove that $K$ is a P\'{o}lya field in each of the five cases mentioned in the statement of Theorem \ref{mainthm}. Since the proofs in all the cases are essentially same, we furnish a complete proof only for the first case.

\smallskip

Since $\Big(\frac{p_{1}}{p_{2}}\Big) = \Big(\frac{p_{1}}{p_{3}}\Big) = -\Big(\frac{p_{2}}{p_{3}}\Big) = 1$, from Lemma \ref{lem1} we have $[a_{2}] = [p_{1}]$. Hence, $H^{1}(G, \mathcal{O}_{K}^{*}) \simeq \langle{[q_{1}], [q_{2}], [p_{1}], [p_{2}p_{3}], [a_{3}]\rangle}$. We now prove that the assumptions on the Lengendre symbols imply that the square-free representative of $[a_{3}]$ is divisible by $p_{2}$ or ${p_{3}}$ but not both. This, in turn, implies that $H^{1}(G, \mathcal{O}_{K}^{*}) \simeq \langle{[q_{1}], [q_{2}], [p_{1}], [p_{2}], [p_{3}]\rangle}$, and as a consequence, $K$ is a P\'{o}lya field.

\smallskip

By following the technique used in the proof of Lemma \ref{lem0} and Lemma \ref{lem1}, we note that $\big(\frac{p_{1}}{p_{3}}\big) = \big(\frac{q_{1}}{p_{3}}\big) = \big(\frac{q_{2}}{p_{3}}\big) = 1 = - \big(\frac{p_{2}}{p_{3}}\big)$ forces $\frac{x_{3}+1}{2} \not \in \{q_{1}m^{2}, q_{2}m^{2}, p_{1}q_{1}m^{2}, p_{1}q_{2}m^{2}, p_{2}p_{3}q_{1}, p_{2}p_{3}q_{2}, p_{1}p_{2}p_{3}q_{1}m^{2}, p_{1}p_{2}p_{3}q_{2}m^{2} \}$. Now the condition $\big(\frac{q_{1}q_{2}}{p_{2}}\big) = -1$ implies $\frac{x_{3}+1}{2} \not \in \{q_{1}q_{2}m^{2}, p_{1}q_{1}q_{2}m^{2}\}$. Also, $\big(\frac{p_{1}}{q_{1}}\big) = -1$ implies that $\frac{x_{3}+1}{2} \neq p_{1}$. Again, $\big(\frac{p_{1}}{q_{1}}\big) = \big(\frac{p_{1}}{q_{2}}\big)$ implies $\big(\frac{q_{1}q_{2}}{p_{1}}\big) = 1$ and therefore, $\frac{x_{3}+1}{2} \neq p_{1}p_{2}p_{3}$. Lastly,  $\frac{x_{3}+1}{2} \neq p_{2}p_{3}$ because $\big(\frac{p_{2}p_{3}}{p_{1}}\big) = -1$ and $\frac{x_{3}+1}{2} \neq p_{2}p_{3}q_{1}q_{2}$ follows from $\big(\frac{-p_{1}}{p_{2}}\big) = -1$. This eliminates all the cases for $t_{1}$ in (\ref{eq1}) such that $\frac{x_{3}+1}{2} = t_{1}m^{2}$ with $1 < \gcd(t_{1}, p_{1}p_{3}) < p_{1}p_{3}$. Consequently, the square-free representative of $[a_{3}]$ is divisible by either $p_{2}$ or $p_{3}$ but not both. This completes the proof of $(1)$ of Theorem \ref{mainthm}.

Moreover, if $\Big(\frac{p_{1}}{p_{3}}\Big) = \Big(\frac{p_{2}}{p_{3}}\Big) = 1$, then Lemma \ref{lem1} implies that either $a_{2} \in \{p_{1}, p_{3}, p_{1}p_{3}\}$ or $a_{2} \in \{p_{2}, p_{3}, p_{2}p_{3}\}$. This, in turn, implies $H^{1}(G, \mathcal{O}_{K}^{*})$ is generated by at least four elements in both the cases and consequently, either $K$ is a P\'{o}lya field or $Po(K) \simeq \mathbb{Z}/2\mathbb{Z}$. $\hfill\Box$ 

\begin{remark}\label{rmk2}
    The first part of Theorem \ref{mainthm} fails to give a precise criterion of $K$ being a P\'{o}lya field. This is evident from Lemma \ref{lem1}, due to multiple values of $a_{2}$. We want to note that, with some additional conditions, if the value of $a_{2}$ becomes unique, then for each of those five cases, one of the conditions from Theorem \ref{mainthm} will act as a sufficient condition for $K$ to be a P\'{o}lya field.  
\end{remark}
\begin{ack}
The authors thank their respective institutions for providing excellent facilities to carry out this work. The first author's reasearch is supported by  CSIR (File no: 09/1026(0036)/2020-EMR-I).
\end{ack}

\end{document}